\newcommand{\G}{\Gamma}
\newcommand{\Wb}{W}
\newcommand{\N}{\mathbb{N}}
\renewcommand{\k}{\Bbbk}
\newcommand{\RR}{\mathcal{R}}
\newcommand{\Rt}{\RR}
\newcommand{\SR}{\k\langle \Delta \rangle}
\newcommand{\Ann}{\operatorname{Ann}}
\newcommand{\Spec}{\operatorname{Spec}}
\newcommand{\Sym}{\operatorname{Sym}}
\newcommand{\Tor}{\operatorname{Tor}}
\newcommand{\Hilb}{\operatorname{Hilb}}
\newcommand{\Fitt}{\operatorname{Fitt}}
\newcommand{\Grass}{\operatorname{Gr}}
\newcommand{\Supp}{\operatorname{Supp}}
\newcommand{\pdim}{\operatorname{pdim}}
\newcommand{\reg}{\operatorname{reg}}
\newcommand{\sqf}{\operatorname{sqf}}
\newcommand{\coker}{\operatorname{coker}}
\newcommand{\rank}{\operatorname{rank}}
\newcommand{\im}{\operatorname{im}}
\newcommand{\id}{\operatorname{id}}
\newcommand{\lk}{\operatorname{lk}}
\newcommand{\abs}[1]{\lvert#1\rvert}
\newcommand{\bwedge}{\mbox{\normalsize $\bigwedge$}}
\definecolor{dkgreen}{RGB}{0,100,0}
\definecolor{dkbrown}{RGB}{139,69,19}
\DeclareMathAlphabet\mathbfcal{OMS}{cmsy}{b}{n}
\newtheorem{theorem}{Theorem}[section]
\newtheorem*{theorem*}{Theorem}
\newtheorem*{problem*}{Problem}
\newtheorem{proposition}[theorem]{Proposition}
\newtheorem{corollary}[theorem]{Corollary}
\newtheorem*{corollary*}{Corollary}
\theoremstyle{definition}
\newtheorem{definition}[theorem]{Definition}
\newtheorem*{definition*}{Definition}
\newtheorem{example}[theorem]{Example}
\newtheorem{remark}[theorem]{Remark}
\newtheorem*{remark*}{Remark}
\newtheorem*{notation*}{Notation}
\newtheorem*{ack}{Acknowledgments}
\numberwithin{equation}{section}
\begin{document}

\title[Resonance and Koszul modules of simplicial complexes]%
{Higher resonance schemes and Koszul modules of simplicial complexes}

\author[M. Aprodu]{Marian Aprodu}
\address{Marian Aprodu: Simion Stoilow Institute of Mathematics \hfill \newline
\indent P.O. Box 1-764,
RO-014700 Bucharest, Romania, and \hfill  \newline
\indent  Faculty of Mathematics and Computer Science, University of Bucharest,  Romania}
\email{{\tt marian.aprodu@imar.ro}}

\author[G. Farkas]{Gavril Farkas}
\address{Gavril Farkas: Institut f\"ur Mathematik, Humboldt-Universit\"at zu Berlin \hfill \newline
\indent Unter den Linden 6,
10099 Berlin, Germany}
\email{{\tt farkas@math.hu-berlin.de}}

\author[C. Raicu]{Claudiu Raicu}
\address{Claudiu Raicu: Department of Mathematics,
University of Notre Dame \hfill \newline
\indent 255 Hurley Notre Dame, IN 46556, USA, and \hfill\newline
\indent Simion Stoilow Institute of Mathematics, \hfill\newline
\indent  P.O. Box 1-764, RO-014700 Bucharest, Romania}
\email{{\tt craicu@nd.edu}}

\author[A. Sammartano]{Alessio Sammartano}
\address{Alessio Sammartano: Dipartimento di Matematica, Politecnico di Milano \hfill\newline
\indent Via Bonardi 9, Milan, 20133, Italy}
\email{{\tt alessio.sammartano@polimi.it}}

\author[A. Suciu]{Alexander I. Suciu}
\address{Alexander I. Suciu: Department of Mathematics,
Northeastern University \hfill \newline
\indent Boston, MA, 02115, USA}
\email{{\tt a.suciu@northeastern.edu}}

\subjclass[2020]{Primary
13F55. 
Secondary
14M12,  
16E05.  
}

\keywords{Simplicial complex, square-free monomial ideal,
Koszul module, resonance variety, reduced scheme, Hilbert series.}

\begin{abstract}
Each connected graded, graded-commutative algebra $A$ of finite type
over a field $\k$ of characteristic zero defines a complex of finitely generated,
graded modules over a symmetric algebra, whose homology graded modules
are called the \emph{(higher) Koszul modules}\/ of $A$. In this
note, we investigate the geometry of the support loci of these modules,
called the \emph{resonance schemes}\/ of the algebra.
When $A=\k\langle \Delta \rangle$ is the exterior Stanley--Reisner algebra
associated to a finite simplicial complex $\Delta$, we show that
the resonance schemes are reduced. We also compute the
Hilbert series of the Koszul modules and give bounds on
the regularity and projective dimension of these graded modules.
This leads to a relationship between resonance and Hilbert series
that generalizes a known formula for the Chen ranks of a
right-angled Artin group.
\end{abstract}

\maketitle

\section{Introduction and statement of results}
\label{sect:intro}

Koszul modules are graded modules over a symmetric algebra that are constructed from
the classical Koszul complex. They emerged from geometric group theory and
topology \cites{AFPRW, PS-crelle} and found applications in other fields such as
algebraic geometry. One prominent instance is \cite{AFPRW2}, where the effective
vanishing in high degrees of some Koszul modules led to a new proof of the celebrated
Green's Conjecture on syzygies of generic canonical curves. The argument relies on a
connection between the graded pieces of those particular Koszul modules and the Koszul
cohomology of the tangent developable surface of a rational normal curve. The non-trivial
vehicle that permits the passage in \cite{AFPRW2} from symmetric powers (Koszul modules)
to exterior powers (Koszul cohomology) is an explicit version of the Hermite reciprocity formula.

It is the aim of this paper to describe a completely new instance where the passage from
Koszul modules to Koszul cohomology of some homogeneous coordinate ring is still possible.
The setup is however simpler and more elementary than the one involved with Green's Conjecture.

For a ground field $\k$ of characteristic $0$, a classical construction of Stanley and Reisner
associates to every simplicial complex $\Delta$ on $n$ vertices a graded,
graded-commutative algebra $\SR = E/J_{\Delta}$,
where $E=\bwedge_{\k}(e_1,\dots, e_n)$ is the
exterior algebra over $\k$ and $J_{\Delta}$ is the ideal generated by all the 
monomials $e_{\sigma}=e_{j_1}\wedge \cdots \wedge e_{j_s}$ corresponding 
to simplices $\sigma=(j_1,\dots, j_s)$ with $1\le j_1<\cdots <j_s \le n$ 
which do not belong to $\Delta$.

Let $S\coloneqq \k[x_1,\dots, x_n]$ be the polynomial ring in $n$ variables
over $\k$, and consider the cochain complex $(\SR^{\bullet}\otimes_{\k} S, \delta)$
of free, finitely generated, graded $S$-modules
obtained by applying the BGG correspondence to the finitely generated,
graded $E$-module $\SR^{\bullet}$.
The Fitting ideals of this complex define the {\em jump resonance loci}\/
of our simplicial complex,
\begin{equation}
\label{eq:res-intro}
\RR^i(\Delta)\coloneqq V\bigl(\Fitt_{\beta_{i+1}} (\delta^{i-1}\oplus \delta^{i})\bigr),
\end{equation}
where $\beta_{i+1}$ is the number of faces of dimension $i$ in $\Delta$.
It was shown in \cite{PS-adv09} that the irreducible components of
$\RR^i(\Delta)$ are coordinate subspaces of $\SR^1=\k^n$,
given explicitly in terms of the (simplicial) homology groups of certain
subcomplexes of $\Delta$.

Now let $\bigl(\SR_{\bullet} \otimes_{\k} S, \partial\bigr)$ be the dual chain complex,
and define the {\em Koszul modules}\/ (in weight $i$) of the simplicial complex
$\Delta$ to be the homology $S$-modules of this complex,
\begin{equation}
\label{eq:km-intro}
\Wb_i(\Delta)\coloneqq H_i\bigl(\SR_{\bullet} \otimes_{\k} S, \partial\bigr).
\end{equation}
An alternate definition of resonance is given by the support loci
of these modules,
\begin{equation}
\label{eq:res-tilde-intro}
\Rt_i(\Delta) \coloneqq V\bigl(\Ann (W_i(\Delta))\bigr).
\end{equation}
These varieties, called the \emph{support resonance loci}, are again
finite unions of coordinate subspaces.
Though they do not coincide in general with the previously defined
sets $\RR^i(\Delta)$, it is known that $\Rt_1(\Delta)=\RR^1(\Delta)$ 
(away from $0$) and $\bigcup_{j\le i}\Rt_j(\Delta)=\bigcup_{j\le i} \RR^j(\Delta)$
for all $i\ge 1$.

A notable property of the higher Koszul modules associated to simplicial
complexes is that they are multigraded as opposed to the general case
when they are only graded modules. Using the general theory of multi-graded
square-free modules, we prove that the multi-graded pieces of the Koszul modules
can be described as multi-graded pieces of some $\Tor$'s over symmetric algebras.
It is known (see for example \cite{PS-crelle}) that the graded pieces of weight-one
Koszul modules are graded pieces of $\Tor$'s over exterior algebras; however, their
relations with $\Tor$'s over symmetric algebras is quite rare in general.

\begin{theorem}
\label{thm:koszul-delta-intro}
For any $i\ge 1$ and any square-free multi-index $\mathbf{b}$, there is
a natural isomorphism of vector spaces,
\begin{equation}
\label{eqn:Mod-Tor-intro}
\left[\Wb_i(\Delta)\right]_\mathbf{b}
\cong \left[\Tor^S_{|\mathbf{b}|-i}
(\k, \k[\Delta])\right]_\mathbf{b}^\vee ,
\end{equation}	
where $\k[\Delta]$ is the polynomial Stanley--Reisner ring of $\Delta$.
\end{theorem}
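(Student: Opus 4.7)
The strategy is to compute both sides of \eqref{eqn:Mod-Tor-intro} independently, each in terms of the reduced simplicial (co)homology of the induced subcomplex $\Delta|_B$, where $B\subseteq\{1,\dots,n\}$ denotes the support of the squarefree multi-index $\mathbf{b}$, and then match them.

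\textbf{Step 1 (unwinding $\Wb_i(\Delta)$ in multidegree $\mathbf{b}$).} I would first identify the multigraded piece $[\SR_\bullet\otimes_\k S]_\mathbf{b}$. A basis element of $\SR_j\otimes_\k S$ has the form $e_\sigma^\vee\otimes x^\alpha$, where $\sigma$ is a face of $\Delta$ with $|\sigma|=j$ and $\alpha\in\N^n$; it sits in multidegree $\mathbf{b}$ precisely when $\sigma\sqcup\operatorname{supp}(\alpha)=B$ and all $\alpha_i\in\{0,1\}$. This forces $\sigma$ to be a face of $\Delta|_B$ of cardinality $j$ and $x^\alpha=\prod_{i\in B\setminus\sigma}x_i$. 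Under the identification $\sigma\leftrightarrow e_\sigma^\vee\otimes\prod_{i\in B\setminus\sigma}x_i$, a direct inspection of the dual BGG differential shows that $\partial$ restricted to multidegree $\mathbf{b}$ becomes (up to a uniform sign convention) the standard simplicial boundary of $\Delta|_B$, with the empty face playing the role of the augmentation. Hence
$$[\Wb_i(\Delta)]_\mathbf{b}\ \cong\ \widetilde H_{i-1}(\Delta|_B;\k).$$

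\textbf{Step 2 (Hochster's formula).} For the other side, I would invoke Hochster's formula: for every $j\ge 0$ and every squarefree multi-index $\mathbf{b}$ with support $B$,
$$[\Tor^S_j(\k,\k[\Delta])]_\mathbf{b}\ \cong\ \widetilde H^{\,|\mathbf{b}|-j-1}(\Delta|_B;\k).$$
Setting $j=|\mathbf{b}|-i$ and dualizing identifies $[\Tor^S_{|\mathbf{b}|-i}(\k,\k[\Delta])]_\mathbf{b}^\vee$ with $\widetilde H_{i-1}(\Delta|_B;\k)$, matching the conclusion of Step 1.

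\textbf{Step 3 (conclusion and naturality).} Composing the two identifications yields the required isomorphism \eqref{eqn:Mod-Tor-intro}. Naturality follows because both sides are functorial in $\Delta$ with respect to face inclusions, so the composite identification is canonical rather than a mere coincidence of dimensions.

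\textbf{Main obstacle.} The delicate point is Step 1: one must verify that the dual BGG differential, after restriction to a squarefree multidegree, really does reproduce the signed simplicial boundary on faces of $\Delta|_B$. The bookkeeping involves the duality $\SR_\bullet=(\SR^\bullet)^\vee$, the orientation signs on faces, and the interplay between the homological grading and the multigrading. Once this combinatorial identification is in hand, the remainder is routine and the comparison with $\Tor^S(\k,\k[\Delta])$ via Hochster's formula is immediate. An alternative, more conceptual but more machinery-heavy route, hinted at in the introduction, would be to invoke Yanagawa's equivalence between squarefree $S$- and $E$-modules, which trades the explicit sign tracking for a categorical formalism that manifestly exchanges $\Tor^S$ with $\Tor^E$ in complementary homological degrees.
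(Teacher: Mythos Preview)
Your proposal is correct, and your Step~1 identification of $[\mathbf{K}^\Delta_\bullet]_\mathbf{b}$ with the augmented simplicial chain complex of $\Delta|_B$ is exactly right (the Koszul differential restricts to the simplicial boundary under $\sigma\leftrightarrow v_\sigma\otimes x^{\mathbf{b}-\mathbf{e}_\sigma}$, with the sign check as routine as you anticipate).

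The paper, however, takes a different route for the isomorphism \eqref{eqn:Mod-Tor-intro} proper: rather than computing each side separately in terms of $\widetilde H_{i-1}(\Delta|_B;\k)$ and then matching, it builds a \emph{direct chain-level} identification between the dual of the complex computing $[\Wb_i(\Delta)]_\mathbf{b}$ and the Koszul complex computing $[\Tor^S_{|\mathbf{b}|-i}(\k,\k[\Delta])]_\mathbf{b}$. The key moves are the identifications $\sqf(S_d)\cong\bigwedge^d V^\vee$ (via $x_{i_1}\cdots x_{i_d}\mapsto e_{i_1}\wedge\cdots\wedge e_{i_d}$) and $A^d\cong\sqf(\k[\Delta])_d$, which transform the dualized sequence $[A^{i-1}\otimes\bigwedge^{j+1}V]_\mathbf{b}\to[A^i\otimes\bigwedge^jV]_\mathbf{b}\to[A^{i+1}\otimes\bigwedge^{j-1}V]_\mathbf{b}$ into the multidegree-$\mathbf{b}$ strand of $\k[\Delta]\otimes\bigwedge^\bullet V$. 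Hochster's formula is invoked only \emph{afterwards}, to obtain the further isomorphism to $\widetilde H^{i-1}(\Delta_\mathbf{b};\k)$.

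The trade-off: the paper's approach makes naturality automatic (it is a chain isomorphism, not a matching of homology groups through a third object), and it explains structurally \emph{why} the two sides agree---both are the same square-free strand of a Koszul-type complex, read in complementary ways. Your approach is more hands-on and arguably more transparent combinatorially, but you then have to argue naturality separately, and you invoke Hochster's formula as a black box on the $\Tor$ side while essentially reproving its content on the $\Wb_i$ side. The alternative route you mention at the end (via $\Tor^E$ and the Aramova--Avramov--Herzog theorem) is also acknowledged in the paper as a less explicit variant.
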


We refer to Section \ref{subsec:sq-free} for a quick review of multi-graded
square-free modules. This multigraded structure of the Koszul modules is captured
in the Hilbert series.

\begin{theorem}
\label{thm:hilb-wd-intro}
For every simplicial complex $\Delta$, the multigraded Hilbert series
of the Koszul modules $\Wb_i(\Delta)$ are given by
\begin{equation*}
\label{eq:hilb-intro}
 \sum_{\mathbf{a}\in \N^n} \dim_{\k} [\Wb_i(\Delta)]_\mathbf{a} \,
\mathbf{t}^\mathbf{a}=
 \sum_{\substack{\mathbf{b}\,\in\, \N^n\\ \mathbf{b} \operatorname{square-free}}}
\dim_{\k}(\widetilde{H}_{i-1}(\Delta_\mathbf{b}; \k))
\frac{\mathbf{t}^\mathbf{b}}{\prod_{j \in \Supp(\mathbf{b})}(1-t_j)}.
\end{equation*}
\end{theorem}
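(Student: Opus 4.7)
The plan is to reduce the Hilbert series computation to an evaluation on square-free multi-degrees, using the fact (to be established in Section~\ref{subsec:sq-free}) that $\Wb_i(\Delta)$ is a square-free multi-graded $S$-module in the sense of Yanagawa. Once this is known, for every $\mathbf{a}\in\N^n$ with $\Supp(\mathbf{a})=\Supp(\mathbf{b})$ where $\mathbf{b}$ is the square-free indicator vector of $\Supp(\mathbf{a})$, multiplication by the appropriate power of $x_j$ induces an isomorphism $[\Wb_i(\Delta)]_{\mathbf{b}}\isom [\Wb_i(\Delta)]_{\mathbf{a}}$. Therefore the multigraded Hilbert series factors as
\[
\sum_{\mathbf{a}\in\N^n}\dim_\k[\Wb_i(\Delta)]_\mathbf{a}\,\mathbf{t}^\mathbf{a}
=\sum_{\mathbf{b}\text{ sqf}} \dim_\k[\Wb_i(\Delta)]_\mathbf{b} \sum_{\mathbf{a}:\Supp(\mathbf{a})=\Supp(\mathbf{b})}\mathbf{t}^\mathbf{a}
=\sum_{\mathbf{b}\text{ sqf}} \dim_\k[\Wb_i(\Delta)]_\mathbf{b}\,\frac{\mathbf{t}^\mathbf{b}}{\prod_{j\in\Supp(\mathbf{b})}(1-t_j)},
\]
where the last equality is the standard evaluation of the geometric sum $\prod_{j\in\Supp(\mathbf{b})}t_j/(1-t_j)$.

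It remains to identify $\dim_\k[\Wb_i(\Delta)]_\mathbf{b}$ for square-free $\mathbf{b}$ with $\dim_\k \widetilde{H}_{i-1}(\Delta_\mathbf{b};\k)$. For this, I would invoke Theorem~\ref{thm:koszul-delta-intro}, which yields
\[
\dim_\k[\Wb_i(\Delta)]_\mathbf{b}=\dim_\k \bigl[\Tor^S_{|\mathbf{b}|-i}(\k,\k[\Delta])\bigr]_\mathbf{b},
\]
and then apply Hochster's formula for the Stanley--Reisner ring $\k[\Delta]$, which states
\[
\dim_\k \bigl[\Tor^S_j(\k,\k[\Delta])\bigr]_\mathbf{b}=\dim_\k \widetilde{H}_{|\mathbf{b}|-j-1}(\Delta_\mathbf{b};\k)
\]
for every square-free $\mathbf{b}$. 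Substituting $j=|\mathbf{b}|-i$ produces precisely $\widetilde{H}_{i-1}(\Delta_\mathbf{b};\k)$, matching the claimed formula.

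The main obstacle is verifying the square-freeness of $\Wb_i(\Delta)$ as a multigraded $S$-module, since only after this structural statement is in hand does the Hilbert series reduce to its square-free slices. I expect this to follow from the fact that $\SR$ is itself a square-free (exterior) algebra and the differential $\partial$ in the complex $(\SR_\bullet\otimes_\k S,\partial)$ is compatible with the natural $\Z^n$-grading, so that taking homology preserves the square-free property in Yanagawa's sense; this compatibility is what Section~\ref{subsec:sq-free} is designed to establish. Once both ingredients---the square-freeness of $\Wb_i(\Delta)$ and the identification of its square-free strata via Theorem~\ref{thm:koszul-delta-intro} and Hochster's formula---are combined, the Hilbert series identity is immediate.
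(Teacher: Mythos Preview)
Your proposal is correct and follows essentially the same route as the paper: reduce the Hilbert series to square-free multidegrees using the square-freeness of $\Wb_i(\Delta)$ (Proposition~\ref{prop:koszul-sqf}, proved exactly along the lines you suggest, via Corollary~\ref{cor:Nn-sqf-homology}), and then identify each square-free graded piece with $\widetilde{H}_{i-1}(\Delta_{\mathbf{b}};\k)$. The only cosmetic difference is that the paper packages your two-step appeal to Theorem~\ref{thm:koszul-delta-intro} plus Hochster's formula into the single chain of isomorphisms of Theorem~\ref{thm:KoszModules-KoszCohom}, so that the proof of the Hilbert series formula becomes a one-line citation.
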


In Section \ref{sec:simp-comp}, we give a precise description of the irreducible components
of the support resonance loci. In each weight $i$, they correspond to maximal subcomplexes
with non-vanishing reduced homology in degree $i-1$.
\begin{theorem}
\label{thm:irred-comps-support-intro}
For every simplicial complex $\Delta$ and every $i\ge 1$, the scheme structure on
the support resonance $\Rt_i(\Delta)$ is reduced.
Moreover, the decomposition in irreducible components is given by
	\begin{equation}
		\label{eq:res-delta2-intro}
		\Rt_i(\Delta)=\bigcup_{\substack{
				\mathsf{V'}\subseteq \mathsf{V}\: \mathrm{ maximal\: with }\\[1pt]
				\widetilde{H}_{i-1}(\Delta_{\mathsf{V'}};\k)\neq 0
		}}\k ^{\mathsf{V'}}.
	\end{equation}
\end{theorem}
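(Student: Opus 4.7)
The plan is to leverage the multigraded (squarefree) structure of $\Wb_i(\Delta)$ so that its annihilator becomes a squarefree monomial ideal that can be read off from the Hilbert series of Theorem~\ref{thm:hilb-wd-intro}.

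First I would verify that $\Wb_i(\Delta)$ is a squarefree $S$-module in the sense recalled in Section~\ref{subsec:sq-free}: this is already suggested by the shape of the multigraded Hilbert series (its support is generated from squarefree degrees by localizing at the corresponding coordinate variables), and it can be extracted from the multigraded structure on the defining chain complex $(\SR_{\bullet}\otimes_{\k}S,\partial)$ together with Theorem~\ref{thm:koszul-delta-intro}. The squarefree property gives two useful facts: (a) $\Ann(\Wb_i(\Delta))$ is $\Z^n$-graded, hence a monomial ideal, and (b) bijectivity of multiplication by $x_j$ on pieces $[\Wb_i(\Delta)]_{\mathbf{a}}$ with $j\in\Supp(\mathbf{a})$ prevents any $x_j^k$ with $k\ge 2$ from lying in the annihilator without $x_j$ itself doing so. Hence $\Ann(\Wb_i(\Delta))$ is a squarefree monomial ideal, in particular radical, and $\Rt_i(\Delta)$ is a reduced union of coordinate subspaces $\k^{\mathsf{V'}}$, settling the first assertion.

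To locate the irreducible components, I would invoke the Stanley--Reisner dictionary: the facets of the simplicial complex on $\mathsf{V}$ associated to the squarefree monomial ideal $\Ann(\Wb_i(\Delta))$ index the minimal primes $P_{[n]\setminus \mathsf{V'}}=(x_j:j\notin \mathsf{V'})$, and $\mathsf{V'}$ is a face precisely when $\mathbf{x}^{\mathsf{V'}}\notin \Ann(\Wb_i(\Delta))$. The squarefree structure converts this last condition into the combinatorial statement $[\Wb_i(\Delta)]_{\mathbf{1}_{\mathsf{V''}}}\neq 0$ for some $\mathsf{V''}\supseteq \mathsf{V'}$: for such a $\mathsf{V''}$, multiplication by $\mathbf{x}^{\mathsf{V'}}$ is bijective on $[\Wb_i(\Delta)]_{\mathbf{1}_{\mathsf{V''}}}$, and conversely any nonzero product $\mathbf{x}^{\mathsf{V'}}\cdot m$ lies in a multidegree whose support contains $\mathsf{V'}$. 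Consequently the facets of this complex are exactly the maximal $\mathsf{V'}\subseteq \mathsf{V}$ with $[\Wb_i(\Delta)]_{\mathbf{1}_{\mathsf{V'}}}\neq 0$.

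Finally, Theorem~\ref{thm:koszul-delta-intro} (or equivalently the squarefree coefficient in Theorem~\ref{thm:hilb-wd-intro}, via Hochster's formula) yields $\dim_{\k}[\Wb_i(\Delta)]_{\mathbf{1}_{\mathsf{V'}}}=\dim_{\k}\widetilde{H}_{i-1}(\Delta_{\mathsf{V'}};\k)$, so that these maximal $\mathsf{V'}$ are exactly those indexing the union in~\eqref{eq:res-delta2-intro}. The principal obstacle is the first step, namely packaging the multigraded information so as to certify that $\Wb_i(\Delta)$ really is a squarefree module; once that is in hand, reducedness and the description of the components follow by formal manipulations with squarefree monomial ideals and the Stanley--Reisner correspondence.
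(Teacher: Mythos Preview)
Your proposal is correct and follows essentially the same route as the paper: establish that $\Wb_i(\Delta)$ is a squarefree $S$-module (the paper does this as Proposition~\ref{prop:koszul-sqf}, directly from the fact that $\mathbf{K}^\Delta_\bullet$ is a complex of free modules generated in squarefree multidegrees, via Corollary~\ref{cor:Nn-sqf-homology}), deduce that its annihilator is a squarefree monomial ideal and hence radical (Theorem~\ref{thm:annihilator}), and then read off the components from the nonvanishing squarefree graded pieces via Proposition~\ref{prop:sfq-support} together with the identification $[\Wb_i(\Delta)]_{\mathbf{1}_{\mathsf{V'}}}\cong \widetilde{H}_{i-1}(\Delta_{\mathsf{V'}};\k)$ of Theorem~\ref{thm:koszul-delta-intro}. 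The only caveat is that what you flag as the ``principal obstacle'' (squarefreeness of $\Wb_i(\Delta)$) is in fact immediate from the defining complex and does not require Theorem~\ref{thm:koszul-delta-intro} or the Hilbert series; indeed, the paper's proof of Theorem~\ref{thm:hilb-wd} already uses squarefreeness as an input, so you should ground that step on the chain complex rather than on the Hilbert series to avoid circularity.
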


Particularly interesting is the case when $\Delta$ is $1$-dimensional,
that is, it may be viewed as a finite simple graph $\Gamma$.
It was shown in \cite{PS-mathann} that all the irreducible components
of $\RR^1(\Gamma)$ are coordinate subspaces, which correspond to the
maximally disconnected full subgraphs of $\Gamma$. This result comes as a direct
consequence of our analysis. The statement concerning the reducedness of $\Rt_i(\Delta)$
can be compared with the detailed study performed in \cite{AFRS} on the scheme structure
of (support) resonance varieties associated to classical Koszul modules.

\begin{ack}
{\small{Aprodu was supported by the PNRR grant CF 44/14.11.2022
\emph{Cohomological Hall algebras of smooth surfaces and applications}.
Farkas supported by the DFG Grant \emph{Syzygien und Moduli} and by the
ERC Advanced Grant SYZYGY. This project has received funding from the
European Research Council (ERC) under the EU Horizon 2020  program
(grant agreement No. 834172). Raicu was supported by the NSF Grant No.~2302341.
Sammartano was supported by the grant PRIN 2020355B8Y
\emph{Square-free Gr\"obner degenerations, special varieties and related topics}.
Suciu was supported by Simons Foundation Collaboration Grant for Mathematicians
No.~693825.}}
\end{ack}

\section{Graded algebras, Koszul modules, and higher resonance}
\label{sect:kozul-res}

We start in a more general context (adapted from the setup in
\cites{PS-mrl, Su-indam}), that will be used throughout the paper.

\subsection{Chain complexes associated to graded algebras}
\label{subsec:cc-ga}

Let $A^{\bullet}$ be a graded, graded-com\-mutative algebra over a field
$\k$ of characteristic $0$, with multiplication maps $A^i \otimes_{\k} A^j \to A^{i+j}$.
We will assume that $A$ is connected (that is, $A^0=\k$) and of finite-type (that is,
$\dim_{\k}A^i<\infty$, for all $i>0$), and we will write $\beta_i(A)=\dim_{\k}A^i$.
To avoid trivialities, we always assume that $\beta_1(A)\neq 0$.

For each $a\in A^1$, graded commutativity of multiplication yields
$a^2=0$, therefore, we have a cochain complex
\begin{equation}
\label{eq:cc}
\begin{tikzcd}[column sep=24pt]
(A^{\bullet} , \delta_{a})\colon  \
A^0 \ar[r, "\delta^0_{a}"] & A^1
\ar[r, "\delta^1_{a}"]
& A^2 \ar[r, "\delta^2_{a}"] & \cdots ,
\end{tikzcd}
\end{equation}
with differentials $\delta^i_{a} (u)= a \cdot u$, for all $u \in A^i$.
The {\em resonance varieties}\/ of $A$ are the jump loci for the
cohomology groups of this complex: for each $i\ge 0$, we put
\begin{equation}
\label{eq:res-a}
\RR^i(A)\coloneqq \bigl\{a \in A^1
\mid H^i(A^{\bullet}, \delta_{a}) \ne 0 \bigr\}.
\end{equation}
Clearly, these are homogeneous subsets of the affine space $A^1$.
Since $A^0$ is $1$-dimensional, generated by $1\in \k$, and since
$\delta_a(1)=a$ for each $a\in A^1$, it follows that $\RR^0(A)=\{0\}$.
The most studied is the first resonance variety, which can be described
as the set
\begin{equation}
\label{eq:res1}
\RR^1(A) =\{ a \in A^1 \mid \text{$\exists\, b \in A^1, \
0\neq a\wedge b \in K^{\perp} $}\} \cup \{0\},
\end{equation}
where $K^{\perp}$ denotes the kernel of the multiplication map $
A^1\wedge A^1 \to A^2$.

Let us now fix a $\k$-basis $\{ e_1,\dots, e_n \}$ of $A^1$ and let
$\{ x_1,\dots, x_n \}$ be the dual basis of the dual $\k$-vector
space $A_1=(A^1)^{\vee}$. This allows us to identify the symmetric
algebra $\Sym(A_1)$ with the polynomial ring $S=\k[x_1,\dots, x_n]$,
the coordinate ring of the affine space $A^1\cong \k^{\beta_1(A)}$.

Viewing $A^{\bullet}$ as a graded module over the exterior algebra
$E^{\bullet}=\bwedge A^1$, the BGG correspondence \cite{Ei-syz}
yields a cochain complex of finitely generated, free $S$-modules,
\begin{equation}
\label{eq:cc-s}
\begin{tikzcd}[column sep=20pt]
\hspace*{-12pt} \bigl(A^{\bullet} \otimes_{\k} S,\delta_A\bigr)\colon
 \cdots \ar[r]
&A^{i}\otimes_{\k} S \ar[r, "\delta^{i}_A"]
&A^{i+1} \otimes_{\k} S \ar[r, "\delta^{i+1}_A"]
&A^{i+2} \otimes_{\k} S \ar[r]
& \cdots,
\end{tikzcd}
\end{equation}
whose coboundary maps are the $S$-linear maps given by
$\delta^{i}_A(u \otimes s)= \sum_{j=1}^{n} e_j u \otimes s x_j$
for $u\in A^i$ and $s\in S$.
It is readily seen that this cochain complex is independent of the
choice of basis for $A^1$ and that, moreover,
the specialization of $(A\otimes_{\k} S,\delta_A)$
at an element $a\in A^1$ coincides  with the complex
$(A,\delta_a)$ defined by \eqref{eq:cc}. 

\begin{remark}
\label{rem:grading}
Typically, the elements in the exterior algebra $E^{\bullet}$ 
are given negative degrees, see for instance \cite{Ei-syz}. 
However, we prefer to work here with the positive grading, 
which amounts to negating the standard action of the torus $(\k^*)^n$ 
on $E^{\bullet}$ as well.  This convention carries over to the exterior 
Stanley--Reisner rings $A^{\bullet}=\SR$ in Section \ref{subsec:SR-ring} 
and is compatible with the notation of \cite{AAH}, as recalled in 
Proposition~\ref{prop:aah-hochster}.
\end{remark}

It follows directly from the definition \eqref{eq:res-a} that a point
$a \in A^1$ belongs to $\RR^i(A)$ if and only if
$\rank \delta^{i-1}_a + \rank \delta^{i}_a < \beta_i(A)$.
Therefore,
\begin{equation}
\label{eq:res-fitt}
\RR^i(A)= V \big( \Fitt_{\beta_{i+1}(A)} \big(\delta^{i-1}_A\oplus \delta^{i}_A\big) \big),
\end{equation}
where $\psi_1\oplus \psi_2$ denotes the block sum of two matrices,
$\Fitt_r(\psi)$ denotes the ideal of minors of size $n-r$ of a matrix 
$\psi\colon S^m\to S^n$, and $V(I)$ denotes the zero-set of an ideal 
$I\subset S$. This shows that the sets $\RR^i(A)$ are algebraic 
subvarieties of the affine space $A^1$ called \emph{jump resonance loci}.

\subsection{Koszul modules and their support loci}
\label{subsec:kozul-res}

Set $A_i\coloneqq (A^i)^{\vee}$ and $\partial^A_i\coloneqq (\delta_A^{i-1})^{\vee}$
and consider the chain complex of finitely generated $S$-modules
\begin{equation}
\label{eq:a-tensor-s}
\begin{tikzcd}[column sep=24pt]
\bigl(A_{\bullet} \otimes_{\k} S,\partial\bigr)\colon
 \cdots \ar[r]
&A_{i+1}\otimes_{\k} S \ar[r, "\partial_{i+1}^A"]
&A_{i} \otimes_{\k} S \ar[r, "\partial_{i}^A"]
&A_{i-1} \otimes_{\k} S \ar[r]
& \cdots.
\end{tikzcd}
\end{equation}
We define the {\em Koszul modules (in weight $i$)}\/ of the algebra $A$ as
the homology $S$-modules of this chain complex, that is,
\begin{equation}
\label{eq:wi-a}
\Wb_i(A) \coloneqq H_i\bigl(A_{\bullet}\otimes_{\k} S\bigr) .
\end{equation}

Clearly, these are finitely generated, graded $S$-modules. The degree $d$
component of the Koszul module $\Wb_i(A)$ is computed by the homology of the complex
\begin{equation}
\label{eq:les-as}
\begin{tikzcd}[column sep=18pt]
A_{i+1}\otimes_\k S_{d-i-1} \ar[r]& A_i\otimes_\k S_{d-i} \ar[r]& A_{i-1}\otimes_\k S_{d-i+1},
\end{tikzcd}
\end{equation}
where we recall that $S=\Sym(A^1)$.  It follows straight from the definitions
that $\Wb_0(A)=\k$ is the trivial $S$-module.

Setting $E_{\bullet}\coloneqq \bwedge A_1$, the first Koszul module also has
the following presentation
\begin{equation}
\label{eq:pres-w1a}
\begin{tikzcd}[column sep=18pt]
\big(E_3 \oplus K \big)  \otimes_{\k}  S
\ar[r, "\partial_3^E +\iota  \otimes_{\k} \id_S"] &[32pt]
E_2  \otimes_{\k}  S \ar[r, two heads] & \Wb_1(A),
\end{tikzcd}
\end{equation}
where
$\begin{tikzcd}[column sep=18pt]
\hspace*{-5pt} K=\bigl\{\varphi \in A_1\wedge A_1=(A^1\wedge A^1)^{\vee} \mid
\varphi_{|K^{\perp}}\equiv 0\bigr\}  \ar[r, hook, "\iota"] & A_1\wedge A_1 =E_2.
\end{tikzcd}$

The {\em resonance schemes}\/ of the graded algebra $A$ are defined by
the annihilator ideals of the Koszul modules of $A$,
\begin{equation}
\label{eq:ria-spec}
\Rt_i(A) \coloneqq \Spec \bigl(S/ \Ann \Wb_i(A)\bigr).
\end{equation}

By slightly abusing notation, we also  denote by $\Rt_i(A)=\Supp \Wb_i(A)$ the
underlying sets and call them \emph{support resonance loci}.
Note that the algebra structure on $A^\bullet$ is not essential in the discussion
above, as the definitions of Koszul modules and support resonance loci only
use the $E$-module structure. In particular, the constructions apply for
finitely-generated graded $E$-modules, as well.

Clearly $\Rt_0(A)=\RR^0(A)=\{0\}$. More generally,
suppose $\Wb_j(A)\neq 0$ for all $1\le j\le i$.
Then, as shown in \cite[Theorem 2.5]{PS-mrl}, the support resonance loci
are related to the jump resonance loci by the formula%
\footnote{We denote by $\Rt^i(A)$ what in \cite{PS-mrl} is denoted
by $\Rt^i_1(A)=\mathcal{V}^i_1(A^{\bullet} \otimes_{\k} S )$ and in
\cite{Su-indam} by $\Rt^i(A)$, whereas
we use the notation $\Rt_i(A)$ for what in \cite{PS-mrl} is denoted by
$\mathcal{W}^i_1(A)=\Supp H_i(A_{\bullet} \otimes_{\k} S)$
and in \cite{Su-indam} by $\widetilde{\RR}_i(A)$.}
\begin{equation}
\label{eq:union-res}
\bigcup_{j\le i} \Rt_j(A) = \bigcup_{j\le i} \RR^j(A).
\end{equation}
In particular, if $\Wb_1(A)\neq 0$, then $\Rt_1(A) =\RR^1(A)$.

\subsection{Quotients of exterior algebras through ideals generated in fixed degree}
\label{subsec:fixed-deg}

We now discuss a particularly interesting case of this general construction.
Fix integers $d\ge 1$ and $n\ge 3$.
Let $V$ be an $n$-dimensional vector space over the field $\k$ and let
$K\subseteq \bwedge^{d+1}V$ be a subspace. Set $S\coloneqq \Sym(V)$
and $E^\bullet\coloneqq \bwedge V^\vee$, and
then consider the linear subspace
\begin{equation}
\label{eq:kperp}
K^\perp\coloneqq \big( \bwedge^{d+1}V/K \big)^\vee =
\big\{\varphi\in \bwedge^{d+1}V^\vee \mid \varphi_{|K}= 0\big\}
\subseteq \bwedge^{d+1}V^\vee.
\end{equation}
Letting
$A^\bullet\coloneqq E^\bullet/\langle K^\perp\rangle$ be the quotient of the exterior
algebra $E^\bullet$ by the (homogeneous) ideal generated by $K^\perp$,
we clearly have $K=A_{d+1}$. Conversely, if $J\subseteq E^\bullet$
is a homogeneous ideal generated in degree $d+1$ and we take
$K^\perp\coloneqq J_{d+1}$, then the algebra $A^\bullet=E^\bullet/J$ is
obtained as above. Denote by $j$ the inclusion of the dual algebra
$A_\bullet$ into $E_\bullet$. Recalling that
$\partial_i \colon \bwedge^i V\otimes_{\k} S\rightarrow \bwedge^{i-1} V\otimes_{\k} S$
is the Koszul differential, we have the following characterization.

\begin{proposition}
\label{prop:Koszul-for-fixed-degree}
The Koszul modules $\Wb_i(V,K)=\Wb_i(A)$ satisfy the following properties:
\begin{enumerate}[itemsep=2pt, topsep=-1pt]
\item \label{wb1}
$\Wb_i(A)=0$ for $i\le d-1$.
\item \label{wb2}
$\Wb_d(A)=\coker \big(\partial_{d+2}+j_{d+1}  \otimes_{\k}  S\big)$.
\end{enumerate}
\end{proposition}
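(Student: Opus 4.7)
The plan is to exploit the fact that since the ideal $J = \langle K^\perp\rangle$ is generated in degree $d+1$, the quotient $A^\bullet = E^\bullet/J$ agrees with $E^\bullet$ in all degrees $\le d$ and differs only in degree $d+1$, where $A^{d+1} = E^{d+1}/K^\perp$. Dually, $A_i = E_i$ for $i \le d$, while the inclusion $j_{d+1}\colon A_{d+1} \hookrightarrow E_{d+1}$ realizes $A_{d+1}$ as the subspace $K \subseteq \bwedge^{d+1} V$. By dualizing the fact that multiplication by $A^1$ in $A^\bullet$ factors as exterior multiplication by $E^1$ on $E^\bullet$ followed by the quotient onto $A^\bullet$, I would conclude that $\partial_i^A = \partial_i^E$ for $i \le d$, and that $\partial_{d+1}^A = \partial_{d+1}^E \circ (j_{d+1} \otimes_\k \id_S)$ as a map $K \otimes_\k S \to E_d \otimes_\k S$.

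For part \eqref{wb1}, assume $1\le i \le d-1$. Then the three-term portion $A_{i+1}\otimes_\k S \to A_i\otimes_\k S \to A_{i-1}\otimes_\k S$ computing $\Wb_i(A)$ coincides, via the identifications above, with the corresponding segment of the classical Koszul complex $\bwedge V \otimes_\k S$. Since the Koszul complex is a free resolution of $\k$ over $S$, its homology vanishes in positive homological degree, and hence $\Wb_i(A) = 0$.

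For part \eqref{wb2}, I start from $\Wb_d(A) = \ker \partial_d^A / \im \partial_{d+1}^A$. Since $\partial_d^A = \partial_d^E$, Koszul exactness at position $d$ (which requires only $d\ge 1$) gives $\ker \partial_d^A = \im \partial_{d+1}^E$, while $\im \partial_{d+1}^A = \partial_{d+1}^E(K \otimes_\k S)$ by the preliminary identification. Therefore $\Wb_d(A) \cong \im \partial_{d+1}^E / \partial_{d+1}^E(K \otimes_\k S)$. Viewing the surjection $\partial_{d+1}^E\colon E_{d+1}\otimes_\k S \twoheadrightarrow \im \partial_{d+1}^E$, whose kernel is $\ker \partial_{d+1}^E = \im \partial_{d+2}^E$ (Koszul exactness once more, at position $d+1$), the preimage of $\partial_{d+1}^E(K\otimes_\k S)$ is $K\otimes_\k S + \im \partial_{d+2}^E$, and the third isomorphism theorem yields
\[
\Wb_d(A) \;\cong\; \frac{E_{d+1}\otimes_\k S}{\im \partial_{d+2}^E + K\otimes_\k S} \;=\; \coker\bigl(\partial_{d+2} + j_{d+1}\otimes_\k S\bigr),
\]
as claimed.

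The proof is essentially bookkeeping combined with two invocations of Koszul exactness, and I do not expect any serious obstacle. The only point that requires genuine care is the explicit identification of $\partial_{d+1}^A$ with the restriction of the standard Koszul differential $\partial_{d+1}^E$ to the subspace $K\otimes_\k S \subseteq E_{d+1}\otimes_\k S$; once this is in place, it becomes transparent why $\partial_{d+2}$ and $j_{d+1}\otimes_\k S$ must be paired in the asserted presentation of $\Wb_d(A)$.
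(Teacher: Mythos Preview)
Your proof is correct and follows essentially the same approach as the paper: identify $A_i$ with $E_i$ for $i\le d$ so that Koszul exactness handles part \eqref{wb1} and gives $\ker\partial_d^A=\im\partial_{d+1}^E$, then pass to the quotient by $\partial_{d+1}^E(K\otimes_\k S)$. The only difference is cosmetic: where the paper packages the final identification $(E_{d+1}\otimes_\k S)/(\im\partial_{d+2}^E + K\otimes_\k S)\cong \Wb_d(A)$ via the Snake Lemma applied to diagram \eqref{eq:snake-1}, you obtain it directly from the third isomorphism theorem, which is arguably cleaner.
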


\begin{proof}
The first part is quite straightforward, as $J_i=0$ for $i\le d-1$ and hence
$A_i=E_i$ for $i\le d-1$. For the second part, first note that the $d$-th
Koszul module is in this case the middle homology of the complex
\begin{equation}
\label{eq:dK-module}
\begin{tikzcd}[column sep=18pt]
K\otimes_{\k} S\ar[r] & \bwedge^dV\otimes_{\k} S \ar[r]& \bwedge^{d-1}V\otimes_{\k} S,
\end{tikzcd}
\end{equation}
and hence
\begin{equation}
\label{eq:WdA}
\begin{tikzcd}[column sep=18pt]
\Wb_d(V,K)=\coker\Bigl\{K\otimes_{\k} S \ar[r, "j_{d+1}  \otimes_{\k} S"]
&[24pt] \bwedge^{d+1}V\otimes_{\k} S \ar[r, two heads]& \coker(\partial_{d+2})\Bigr\}.
\end{tikzcd}
\end{equation}
Applying now the Snake Lemma to the diagram
\begin{equation}
\label{eq:snake-1}
\begin{tikzcd}[column sep=16pt]
0 \ar[r] & \bwedge^{d+2}V\otimes_{\k} S \ar[r] \ar[d, two heads]
& \bigl(\bwedge^{d+2}V \oplus K\bigr)\otimes_{\k} S\ar[r]
\ar[d, "\partial_{d+2} + j_{d+1}\otimes_{\k} S"]
& K\otimes_{\k} S\ar[r]\ar[d] & 0 \\
0 \ar[r]& \im(\partial_{d+2}) \ar[r]& \bwedge^{d+1}V\otimes_{\k} S \ar[r]
&\coker(\partial_{d+2})\ar[r]&0
\end{tikzcd}
\end{equation}
establishes the claim.
\end{proof}

\begin{remark}
\label{rem:snake}
Note that the Snake Lemma also applies to the diagram
\begin{equation}
\label{eq:snake-2}
\begin{tikzcd}[column sep=16pt]
	0 \ar[r] & K\otimes_{\k} S\ar[r]\ar[d, equal] 
	& \bigl(\bwedge^{d+2}V \oplus K\bigr) \otimes_{\k} S\ar[r]\ar[d]
	& \bwedge^{d+2}V\otimes_{\k} S \ar[r]\ar[d] & 0 \\
	0 \ar[r]& K\otimes_{\k} S \ar[r]& \bwedge^{d+1}V\otimes_{\k} S \ar[r]
	&(\bwedge^{d+1}V/K)\otimes_{\k} S\ar[r]&0,
\end{tikzcd}
\end{equation}
leading to the simpler presentation
\begin{equation}
	\label{eqn:simple-presentation}
	\begin{tikzcd}[column sep=18pt]
		\Wb_d(A)=\coker\Bigl\{\bwedge^{d+2}V\otimes_{\k} S\ar[r]
		& (\bwedge^{d+1}V/K)\otimes_{\k} S\Bigr\}.
	\end{tikzcd}
\end{equation}
\end{remark}

If $d=1$, in weight $1$ we recover the original Koszul module $W(V,K)$ of a pair $(V,K)$ with
$K\subseteq \bwedge^2V$ considered in \cites{AFPRW, AFRS, PS-crelle} and elsewhere.
However, note the shift by two in degrees, that is, $W(V,K)=\Wb_1(A)(2)$.

\begin{example}
\label{ex:vb}
Let $X$ be a smooth complex projective variety, and consider a vector bundle
$E$ on $X$ of rank $\ge r+1$, for some integer $r\ge 1$. We consider the
determinant maps
	\begin{equation}
		\label{eqn:drmap}
		\begin{tikzcd}[column sep=18pt]
			d_r\colon \bwedge^{r+1}H^0(X,E)\ar[r]& H^0(X,\bwedge^{r+1}E)
		\end{tikzcd}
	\end{equation}
and take $K_r^\perp\coloneqq \ker(d_r)$. Then the above construction applies,
producing for each $r$ a series of Koszul modules
$\Wb_r(X,E)\coloneqq \Wb_r\bigl(H^0(X, E)^{\vee}, K_r \bigr)$.
\end{example}

As in the case $d=1$ (see \cites{AFPRW2, PS-crelle}), we have a geometric
characterization of vanishing resonance, in which case the corresponding
Koszul module is of finite length and hence vanishes in high degrees.
For an element $\omega\in \bwedge^{d+1}V^\vee$, we denote by
$\varphi(\omega)\colon V^\vee\to\bwedge^{d+2}V^\vee$ the map
$a\mapsto a\wedge\omega$. Consider the projective variety parameterizing
the decomposable elements,
\begin{equation}
\label{eq:Sigma}
\Sigma_d\coloneqq \bigl\{[\omega]\in\mathbb{P} \big(\bwedge^{d+1}V^\vee \big) \mid
\rank (\varphi(\omega))\le n-1\bigr\}.
\end{equation}

Standard multilinear algebra proves the following proposition.
\begin{proposition}
\label{prop:rtd-a}
If $d\ge 2$, then $\Rt_d(A)=\{0\}$ if and only if
$\mathbb{P}(K^\perp)\cap \Sigma_d=\emptyset$.
\end{proposition}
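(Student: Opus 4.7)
The plan is to exploit the presentation of $\Wb_d(A)$ given in Remark~\ref{rem:snake},
\[
\Wb_d(A)=\coker\bigl(\phi\colon \bwedge^{d+2}V\otimes_{\k} S\longrightarrow (\bwedge^{d+1}V/K)\otimes_{\k} S\bigr),
\]
in which $\phi$ is induced by the Koszul differential $\partial_{d+2}$ followed by the canonical projection modulo $K$, and to translate the support condition into a wedge condition involving elements of $K^{\perp}$.

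First, I would establish a pointwise criterion for the support. Since $\Wb_d(A)$ is a finitely generated graded $S$-module with a presentation whose matrix has entries in $S_1$, Nakayama's lemma yields that a point $a\in V^{\vee}\cong \Spec(S)$ lies in $\Rt_d(A)$ if and only if the specialization $\phi_a\colon \bwedge^{d+2}V\to \bwedge^{d+1}V/K$ fails to be surjective. Substituting $x_j\mapsto a(x_j)$ turns the Koszul differential $\partial_{d+2}$ into the contraction map $\iota_a\colon \bwedge^{d+2}V\to \bwedge^{d+1}V$, so $\phi_a$ is $\iota_a$ composed with the quotient. Equivalently, $\phi_a$ is surjective precisely when $\iota_a(\bwedge^{d+2}V)+K=\bwedge^{d+1}V$.

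Next, I would dualize using the natural pairing $\bwedge^{i}V^{\vee}\otimes \bwedge^{i}V\to\k$. The adjunction $\langle a\wedge \omega,\tau\rangle=\langle \omega,\iota_a\tau\rangle$ for $\omega\in \bwedge^{d+1}V^{\vee}$, $\tau\in \bwedge^{d+2}V$, and $a\in V^{\vee}$, identifies the annihilator of $\iota_a(\bwedge^{d+2}V)$ inside $\bwedge^{d+1}V^{\vee}$ with the kernel of $a\wedge -\colon \bwedge^{d+1}V^{\vee}\to \bwedge^{d+2}V^{\vee}$. Passing to orthogonals, the surjectivity of $\phi_a$ is equivalent to $K^{\perp}\cap \ker(a\wedge -)=\{0\}$, i.e., to the nonexistence of any nonzero $\omega\in K^{\perp}$ with $a\wedge\omega=0$.

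To conclude, $\Rt_d(A)=\{0\}$ means precisely that the previous condition holds for every $0\neq a\in V^{\vee}$. By the definition of $\Sigma_d$, a class $[\omega]\in \mathbb{P}(\bwedge^{d+1}V^{\vee})$ belongs to $\Sigma_d$ if and only if $\varphi(\omega)$ has a nontrivial kernel, i.e., if some $0\neq a\in V^{\vee}$ satisfies $a\wedge\omega=0$. Combining these observations yields the equivalence $\Rt_d(A)=\{0\} \Leftrightarrow \mathbb{P}(K^{\perp})\cap\Sigma_d=\emptyset$. The main (minor) technical point is to keep the orthogonal-complement identifications straight, so that the specialized Koszul map $\iota_a$ is properly recognized as dual to the wedge map $a\wedge -$; the hypothesis $d\ge 2$ does not seem to play an essential role here, as the argument works uniformly for any $d\ge 1$.
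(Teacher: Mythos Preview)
Your argument is correct. The paper's proof and yours arrive at the same pivotal criterion---that a nonzero $a\in V^{\vee}$ lies in $\Rt_d(A)$ precisely when some nonzero $\omega\in K^{\perp}$ satisfies $a\wedge\omega=0$---but from dual sides. The paper works on the cochain side: it asserts (using the description of $A^{\bullet}$ in low degrees) that $\Rt_d(A)\neq\{0\}$ is equivalent to the existence of $a\in V^{\vee}$ and $b\in\bwedge^d V^{\vee}$ with $0\neq a\wedge b\in K^{\perp}$, and then invokes the elementary fact that $a$ divides $\omega$ if and only if $a\wedge\omega=0$ to rewrite this in terms of $\ker\varphi(\omega)$. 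You instead work on the chain side, using the cokernel presentation of Remark~\ref{rem:snake}, Nakayama's lemma to detect the support fiberwise, and the contraction/wedge adjunction to dualize; this makes the support step fully explicit rather than implicit. Your final remark that the hypothesis $d\ge 2$ is not used is consistent with the paper, which notes just after the proof that $d=1$ is special only because of the shape of $\Sigma_1$, not because the argument breaks down.
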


 \proof
Recall that $a\in V^\vee$ divides $\omega$ if and only if $a\wedge\omega=0$.
Therefore, $\Rt_d(A)\neq\{0\}$ if and only if there exist $a\in V^\vee$ and
$b\in \bwedge^dV^\vee$ such that $0\neq a\wedge b\in K^\perp$. This is
equivalent to the existence of a non-zero element $a\in V^\vee$ and of a
non-zero element $\omega\in K^\perp$ such that $a\wedge\omega=0$, i.e.,
$0\neq a\in \ker(\varphi(\omega))$, and hence $[\omega]\in \Sigma_d$.
 \endproof

The case $d=1$ is special, since $\varphi(\omega)$ non-injective implies its kernel is
at least $2$-dimensional. Indeed, if $\omega =a\wedge b\neq 0$ then $\ker(\varphi(\omega))$
is generated by $a$ and $b$. In this case, $\Sigma_1$ is the Grassmann variety
$\Grass_2(V^\vee)\subseteq \mathbb{P}\bigl(\bwedge^2 V^{\vee}\bigr)$.

\begin{remark}
\label{rem:detmap}
For the Koszul module $\Wb_r(X,E)$ considered in Example \ref{ex:vb}, we have that
the resonance $\Rt_r(X,E)\coloneqq \Supp \Wb_r(X,E)$ is non trivial if and only if there
exists a section $0\neq s\in H^0(X,E)$ such that the determinant map
$d_s\colon \bwedge^r H^0(X,E)\rightarrow \bwedge^{r+1} H^0(X,E)$
given by $\omega\mapsto d_{r+1}(s\wedge \omega)$ is not injective.
\end{remark}

\section{Simplicial complexes and their Koszul modules}
\label{sect:res-sc}

\subsection{Square-free modules}
\label{subsec:sq-free}
We start this section with some algebraic preliminaries regarding
square-free modules. We recall from \cite{Ya00} some basic facts
about this type of modules, which will be needed in
Sections  \ref{subsec:hilb}, \ref{sec:simp-comp}, and \ref{subsec:reg}.

Let $V$ be a $\k$-vector space of dimension $n$, and identify the symmetric
algebra $\Sym(V)$ with the polynomial ring $S=\k[x_1,\ldots,x_n]$.
We consider the standard $\N^n$-multigrading on $S$,
defined by $\deg(x_i) = \mathbf{e}_i \in \N^n$,
where $\mathbf{e}_i=(0,\ldots,1,\ldots,0)$ is the multi-index
with $1$ placed in the $i$-th position.
Given a multi-index $\mathbf{a}=(a_1, \ldots, a_n) \in \N$,
its support is defined as the set
$\Supp(\mathbf{a})\coloneqq \{ i \mid a_i > 0\}$.

\begin{definition}
\label{defn:Nn-square-free}
An $\N^n$-graded $S$-module $M$ is said to be \emph{square-free}\/
if for any $\mathbf{a}\in\N^n$ and any $i\in\Supp(\mathbf{a})$,
the multiplication map
\begin{equation}
\label{eq:mult-map}
\begin{tikzcd}[column sep=16pt]
x_i \colon M_{\mathbf{a}} \ar[r]& M_{\mathbf{a}+\mathbf{e}_i}
\end{tikzcd}
\end{equation}
is an isomorphism.
\end{definition}

This definition is a direct generalization of the case of ideals.
Indeed, an ideal $I \subseteq S$ is a square-free module if and only if it is a
square-free monomial ideal, and this is also equivalent to $S/I$
being a square-free module.

Note that a free $\N^n$-graded $S$-module is square-free if
and only it is  generated in square-free multidegrees.

\begin{proposition}
\label{prop:Nn-sqf-ker-coker}
If $f\colon M\to N$ is a morphism of $\N^n$-graded $S$-modules,
and $M$ and $N$ are square-free modules, then $\ker(f)$ and
$\coker(f)$ are also square-free. Moreover, if
\begin{equation*}
\label{eq:MMM}
\begin{tikzcd}[column sep=16pt]
0\ar[r]& M' \ar[r]& M \ar[r]&  M'' \ar[r]& 0
\end{tikzcd}
\end{equation*}
is an exact sequence of $\N^n$-graded $S$-modules, and $M'$
and $M''$ are  square-free, then so is $M$.
\end{proposition}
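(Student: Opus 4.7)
The plan is to reduce the entire assertion to the five lemma, applied in each pair of multidegrees separately. The crucial preliminary observation is that any morphism of $\N^n$-graded $S$-modules preserves multidegrees and commutes with multiplication by each $x_i$. Hence, for any $\mathbf{a}\in\N^n$ and any $i\in\Supp(\mathbf{a})$, restricting an exact sequence of such modules to the multidegrees $\mathbf{a}$ and $\mathbf{a}+\mathbf{e}_i$ yields a commutative ladder of $\k$-vector spaces with vertical arrows given by multiplication by $x_i$. The square-free condition on a module is then precisely the statement that the vertical arrow on that module in every such ladder is an isomorphism.

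For the first assertion, I would view $f\colon M\to N$ as part of the four-term exact sequence $0\to\ker(f)\to M\to N\to\coker(f)\to 0$. In the resulting ladder between multidegrees $\mathbf{a}$ and $\mathbf{a}+\mathbf{e}_i$, the two middle vertical arrows are isomorphisms by the hypothesis that $M$ and $N$ are square-free. A single application of the five lemma to the left half forces the vertical arrow $\ker(f)_\mathbf{a}\to\ker(f)_{\mathbf{a}+\mathbf{e}_i}$ to be an isomorphism, and symmetrically for the right half and $\coker(f)$. Since $\mathbf{a}$ and $i\in\Supp(\mathbf{a})$ were arbitrary, both $\ker(f)$ and $\coker(f)$ are square-free.

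For the second assertion, I apply the same strategy to the ladder between multidegrees $\mathbf{a}$ and $\mathbf{a}+\mathbf{e}_i$ arising from $0\to M'\to M\to M''\to 0$: the rows are short exact, the outer vertical arrows are isomorphisms by the square-free hypothesis on $M'$ and $M''$, and the five lemma immediately gives that the middle vertical arrow on $M$ is an isomorphism.

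This is essentially a routine diagram chase and I do not foresee any substantive obstacle. The only point that requires care is bookkeeping, namely to run the five lemma only between multidegrees $\mathbf{a}$ and $\mathbf{a}+\mathbf{e}_i$ with $i\in\Supp(\mathbf{a})$, since the square-free hypothesis says nothing about multiplication by $x_i$ out of $M_\mathbf{a}$ when $a_i=0$; but this restriction is exactly what the definition of square-free covers, so it is automatic.
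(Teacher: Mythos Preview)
Your argument is correct. The five-lemma reduction works exactly as you describe: padding the four-term exact sequence $0\to\ker(f)\to M\to N\to\coker(f)\to 0$ with zeros on either side lets you apply the five lemma once to conclude that the map on $\ker(f)$ is an isomorphism and once for $\coker(f)$, and the short exact sequence case is the five lemma verbatim. Your closing remark about only considering $i\in\Supp(\mathbf{a})$ is the right caveat and is handled automatically.

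As for comparison with the paper: there is nothing to compare. The paper states this proposition without proof, treating it as a standard fact recalled from Yanagawa's work \cite{Ya00} (see the opening sentence of Section~\ref{subsec:sq-free}). Your write-up is a perfectly adequate self-contained justification, and in fact the argument you give is essentially the one Yanagawa uses.
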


Proposition \ref{prop:Nn-sqf-ker-coker} has a few interesting consequences.

\begin{corollary}
\label{cor:sqf-res}
Let $M$ be an $\N^n$-graded square-free $S$-module.
Then all the modules in the  minimal free $\N^n$-graded
resolution of $M$ are square-free.
\end{corollary}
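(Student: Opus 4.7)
The plan is to proceed by induction on homological degree, combining Proposition \ref{prop:Nn-sqf-ker-coker} (which propagates square-freeness to syzygies) with a direct argument showing that the free covers appearing in the minimal resolution are themselves square-free.

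First I would establish the following key fact: if $N$ is any $\N^n$-graded square-free $S$-module, then its minimal generators lie in square-free multidegrees. Suppose $\mathbf{a} \in \N^n$ is \emph{not} square-free, so that some coordinate satisfies $a_i \geq 2$. Then $i \in \Supp(\mathbf{a} - \mathbf{e}_i)$, and Definition \ref{defn:Nn-square-free} guarantees that multiplication by $x_i$ induces an isomorphism $N_{\mathbf{a}-\mathbf{e}_i} \isom N_{\mathbf{a}}$. Consequently every element of $N_{\mathbf{a}}$ lies in $\mathfrak{m} \cdot N$, where $\mathfrak{m} = (x_1,\ldots,x_n)$, so by graded Nakayama no element of $N_{\mathbf{a}}$ can appear in a minimal homogeneous generating set. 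Thus any minimal set of $\N^n$-graded generators of $N$ is concentrated in square-free multidegrees, which, in view of the observation preceding Proposition \ref{prop:Nn-sqf-ker-coker}, makes the corresponding free $\N^n$-graded cover square-free.

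Now let $\cdots \to F_1 \to F_0 \to M \to 0$ be the minimal free $\N^n$-graded resolution of $M$, and put $K_{-1} \coloneqq M$ and $K_i \coloneqq \ker(F_i \to F_{i-1})$ for $i\ge 0$ (with the convention $F_{-1}=0$). The discussion above applied to $M$ shows that $F_0$ is square-free. Since $F_0$ and $K_{-1}=M$ are both $\N^n$-graded square-free modules, Proposition \ref{prop:Nn-sqf-ker-coker} yields that $K_0 = \ker(F_0 \to M)$ is square-free. Applying the key fact to $K_0$, its minimal generators sit in square-free multidegrees, whence $F_1$ is square-free; a second application of Proposition \ref{prop:Nn-sqf-ker-coker} shows $K_1$ is square-free. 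Iterating this two-step procedure gives square-freeness of every $F_i$ and every $K_i$.

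The only step requiring thought is the key fact that minimality forces generators into square-free multidegrees; everything else is bookkeeping built on Proposition \ref{prop:Nn-sqf-ker-coker}. So I do not anticipate any serious obstacle beyond writing this first step out carefully.
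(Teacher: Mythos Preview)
Your argument is correct and is precisely the expected elaboration of what the paper leaves implicit: the corollary is stated without proof as a direct consequence of Proposition~\ref{prop:Nn-sqf-ker-coker}, and you correctly supply the one nontrivial ingredient, namely that minimal generators of a square-free module must lie in square-free multidegrees. (There is a harmless notational slip---with $F_{-1}=0$ your convention gives $K_0=F_0$ rather than $\ker(F_0\to M)$---but the intended inductive step is clear and sound.)
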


\begin{corollary}
\label{cor:Nn-sqf-homology}
If $\mathbf{F}$ is a bounded complex of free square-free $S$-modules,
then the homology modules of $\mathbf{F}$ are also square-free.
\end{corollary}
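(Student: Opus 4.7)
The plan is to apply Proposition \ref{prop:Nn-sqf-ker-coker} twice, once to identify the image of each differential as square-free and once to identify the homology as the cokernel of an inclusion of square-free modules. Throughout, write the complex as
\[
\mathbf{F}\colon \cdots \longrightarrow F_{i+1}\xrightarrow{d_{i+1}} F_i\xrightarrow{d_i} F_{i-1}\longrightarrow\cdots,
\]
with each $F_j$ a free $\N^n$-graded square-free $S$-module. Fix an index $i$; my goal is to show that $H_i(\mathbf{F})=\ker(d_i)/\im(d_{i+1})$ is square-free.

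First, I would observe that $\ker(d_i)$ is square-free. This is immediate from the first assertion of Proposition \ref{prop:Nn-sqf-ker-coker}, since $d_i$ is a morphism of $\N^n$-graded modules between the square-free modules $F_i$ and $F_{i-1}$. By the same token, $\ker(d_{i+1})$ is square-free as well.

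Second, I would identify $\im(d_{i+1})$ with $\coker\bigl(\ker(d_{i+1})\hookrightarrow F_{i+1}\bigr)$, via the standard short exact sequence
\[
0\longrightarrow \ker(d_{i+1})\longrightarrow F_{i+1}\longrightarrow \im(d_{i+1})\longrightarrow 0.
\]
Since both $\ker(d_{i+1})$ and $F_{i+1}$ are square-free by the previous step, another application of Proposition \ref{prop:Nn-sqf-ker-coker} (the cokernel statement) yields that $\im(d_{i+1})$ is square-free.

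Finally, the natural inclusion $\im(d_{i+1})\hookrightarrow \ker(d_i)$ is a morphism of $\N^n$-graded modules between two square-free $S$-modules. Applying the cokernel statement of Proposition \ref{prop:Nn-sqf-ker-coker} one last time gives that $H_i(\mathbf{F})=\ker(d_i)/\im(d_{i+1})$ is square-free, as desired. There is no serious obstacle here: the bounded-complex hypothesis plays no real role in the argument (beyond ensuring the homology is taken degreewise), and the entire proof is a mechanical consequence of the kernel/cokernel closure properties already established for square-free modules.
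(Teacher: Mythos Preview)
Your proof is correct and is exactly the intended derivation: the paper states the corollary without proof, as an immediate consequence of Proposition~\ref{prop:Nn-sqf-ker-coker}, and your repeated application of the kernel/cokernel closure is the standard way to unpack that. Your observation that boundedness is inessential is also accurate.
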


The following result will be of particular interest for us.

\begin{theorem}
\label{thm:annihilator}
If $M$ is an $\N^n$-graded, square-free $S$-module,
then its annihilator is a square-free monomial ideal.
In particular, the annihilator of $M$ is a radical ideal.
\end{theorem}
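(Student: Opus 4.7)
The plan is to analyze $\Ann(M)$ multidegree by multidegree. Since $M$ is $\N^n$-graded, the $S$-action respects the multigrading, so $\Ann(M)$ is an $\N^n$-graded ideal of $S$. The multigraded pieces $S_\mathbf{a}$ are one-dimensional, spanned by the monomial $x^\mathbf{a}$, so any $\N^n$-graded ideal of $S$ is automatically a monomial ideal. Hence
\begin{equation*}
\Ann(M) \;=\; \bigl(x^\mathbf{a} : \mathbf{a}\in\N^n,\; x^\mathbf{a}\cdot M_\mathbf{b} = 0 \text{ for every } \mathbf{b}\in\N^n\bigr).
\end{equation*}
To prove that $\Ann(M)$ is generated by square-free monomials, it suffices to show that whenever $x^\mathbf{a}\in\Ann(M)$, also $x^{\sqf(\mathbf{a})}\in\Ann(M)$, where $\sqf(\mathbf{a})$ denotes the characteristic vector of $\Supp(\mathbf{a})$.

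To prove this reduction, fix $\mathbf{b}\in\N^n$ and factor the multiplication map $x^\mathbf{a}\colon M_\mathbf{b}\to M_{\mathbf{a}+\mathbf{b}}$ as
\begin{equation*}
M_\mathbf{b} \xrightarrow{\,x^{\sqf(\mathbf{a})}\,} M_{\mathbf{b}+\sqf(\mathbf{a})} \xrightarrow{\,x^{\mathbf{a}-\sqf(\mathbf{a})}\,} M_{\mathbf{a}+\mathbf{b}}.
\end{equation*}
The exponent $\mathbf{a}-\sqf(\mathbf{a})$ is supported on $\Supp(\mathbf{a})$, so the second arrow is a composition of multiplications by variables $x_i$ with $i\in\Supp(\mathbf{a})$. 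Crucially, at every intermediate stage the source multidegree still contains $\Supp(\mathbf{a})$ in its support, because one starts from $\mathbf{b}+\sqf(\mathbf{a})$ (whose support already contains $\Supp(\sqf(\mathbf{a}))=\Supp(\mathbf{a})$) and only increases coordinates indexed by elements of $\Supp(\mathbf{a})$. The square-free hypothesis on $M$ then makes each individual multiplication by $x_i$ an isomorphism, so the second arrow in the factorization is an isomorphism. Since the composite $x^\mathbf{a}$ vanishes on $M_\mathbf{b}$ by assumption, the first arrow must be zero for every $\mathbf{b}$; this gives $x^{\sqf(\mathbf{a})}\cdot M_\mathbf{b}=0$ for every $\mathbf{b}$, so $x^{\sqf(\mathbf{a})}\in\Ann(M)$.

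With square-freeness of the generators of $\Ann(M)$ established, the radicality claim is a standard fact: any square-free monomial ideal coincides with the intersection of the coordinate primes of the form $(x_i : i\in\tau)$, and is therefore radical. I do not anticipate a genuine obstacle; the only subtlety lies in the bookkeeping of intermediate multidegrees in the factorization above, to ensure each successive application of the isomorphism hypothesis of Definition~\ref{defn:Nn-square-free} is licensed.
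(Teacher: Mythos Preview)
Your proof is correct and follows essentially the same approach as the paper: both arguments first observe that $\Ann(M)$ is automatically a monomial ideal, and then use the square-free hypothesis to show that the multiplication by the ``excess'' part of a monomial (beyond its square-free support) is an isomorphism, forcing the square-free part of any annihilating monomial to already lie in $\Ann(M)$. The only cosmetic difference is that the paper peels off one redundant variable $x_k$ at a time and iterates, whereas you strip all of them at once via the single factorization $x^{\mathbf{a}} = x^{\mathbf{a}-\sqf(\mathbf{a})}\cdot x^{\sqf(\mathbf{a})}$.
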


\begin{proof}
Since $M$ is an $\N^n$-graded $S$-module,
the annihilator $\Ann(M) \subseteq S$ is also
$\N^n$-graded, that is, it is a monomial ideal.
Let $m=x_{1}^{a_1}\cdots x_{n}^{a_n}\in\Ann(M)$
be a monomial annihilating $M$, and assume
$a_k>1$ for some $k$.
Then the multiplication map
\[
\begin{tikzcd}[column sep=16pt]
	m \colon M_{\mathbf{b}}\ar[r]&  M_{{\mathbf{b}}+\deg(m)}
\end{tikzcd}
\]
is zero for all $\mathbf{b} \in \N^n$. We have
$k \in \Supp(\mathbf{b}+\deg(m)-\mathbf{e}_k)$, and
so, by hypothesis, the  map
\[
	\begin{tikzcd}[column sep=16pt]
		x_k  \colon M_{\mathbf{b}+\deg(m)-\mathbf{e}_k}\ar[r]&
		M_{\mathbf{b}+\deg(m)}
	\end{tikzcd}
\]
is an isomorphism. Therefore,
\[
	\begin{tikzcd}[column sep=16pt]
		m/{x_k} \colon M_{\mathbf{b}}\ar[r]& M_{\mathbf{b}+
			\deg(m)-\mathbf{e}_k}
	\end{tikzcd}
\]
is the zero map for all $\mathbf{b} \in \N^n$, and
thus $x_{1}^{a_1}\cdots x_k^{a_k-1}\cdots x_{n}^{a_n}\in\Ann(M)$.
By repeating the argument, we see that $\Ann(M)$ is a square-free
monomial ideal.
\end{proof}

Finally, we note that Theorem \ref{thm:annihilator} and \cite[Lemma 2.2]{Ya00} give the following.

\begin{proposition}
\label{prop:sfq-support}
Let $M$ be a finitely-generated $\N^n$-graded, square-free $S$-module. Then the annihilator
scheme structure on the support of $M$ is reduced. Moreover, the decomposition of the
support in irreducible components is given by
	\[
	\Supp(M)=\bigcup_{\substack{
		\textup{$\mathbf{b}$ square-free}\\ \textup{maximal with $M_\mathbf{b}\ne 0$}
	}}\k ^{\Supp(\mathbf{b})},
	\]
where $\k^\mathsf{V'}$ denotes the locus $V(x_i|\ i\not\in \mathsf{V}')$.
\end{proposition}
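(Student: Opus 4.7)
The reducedness assertion is immediate from Theorem~\ref{thm:annihilator}: since $\Ann(M)$ is a square-free monomial ideal, it is radical, so $\Spec(S/\Ann(M))$ is reduced.

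For the irreducible decomposition, my plan is to realize $\Ann(M)$ as the Stanley--Reisner ideal of a simplicial complex and then match its facets with the maximal square-free multidegrees at which $M$ is non-zero. By Theorem~\ref{thm:annihilator}, one has $\Ann(M)=I_\Delta$ for some simplicial complex $\Delta$ on $\mathsf{V}=\{1,\dots,n\}$, whose minimal primes are precisely $(x_i\mid i\in\mathsf{V}\setminus F)$ as $F$ runs over the facets of $\Delta$; this already yields
\[
\Supp(M)=V(\Ann(M))=\bigcup_{F\,\text{facet of }\Delta}\k^F.
\]
It then remains to prove that the facets of $\Delta$ coincide with the maximal subsets $F\subseteq\mathsf{V}$ such that $M_{\mathbf{b}}\neq 0$ for the square-free $\mathbf{b}$ with $\Supp(\mathbf{b})=F$.

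The key technical input is \cite[Lemma 2.2]{Ya00}: for a square-free module, $\dim_\k M_\mathbf{a}$ depends only on $\Supp(\mathbf{a})$. Indeed, for $i\in\Supp(\mathbf{a})$ the multiplication $x_i\colon M_\mathbf{a}\to M_{\mathbf{a}+\mathbf{e}_i}$ is an isomorphism by definition, and iterating yields $M_\mathbf{a}\cong M_{\mathbf{b}}$ whenever $\mathbf{b}$ is square-free with $\Supp(\mathbf{b})=\Supp(\mathbf{a})$. In particular, if $\mathbf{b}$ is square-free with $\Supp(\mathbf{b})=F$, then $x^F\colon M_\mathbf{b}\to M_{2\mathbf{b}}$ is an isomorphism, so $M_\mathbf{b}\neq 0 \Rightarrow x^F\notin\Ann(M) \Rightarrow F\in\Delta$. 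Conversely, if $F\in\Delta$ there exists $\mathbf{a}$ with $x^F\cdot M_\mathbf{a}\neq 0$, hence $M_{\mathbf{a}+\mathbf{b}}\neq 0$, and the support-only dependence forces $M_{\mathbf{b}'}\neq 0$ for the square-free $\mathbf{b}'$ with $\Supp(\mathbf{b}')=F\cup\Supp(\mathbf{a})\supseteq F$.

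Setting $\Delta'\coloneqq\{F\subseteq\mathsf{V}:M_\mathbf{b}\neq 0\text{ for the square-free }\mathbf{b}\text{ with }\Supp(\mathbf{b})=F\}$, the previous paragraph gives $\Delta'\subseteq\Delta$ and shows that every face of $\Delta$ is contained in some element of $\Delta'$. A short maximality argument then implies that the facets of $\Delta$ coincide with the maximal elements of $\Delta'$, which is exactly the asserted decomposition. The only (mild) point of care, and the main conceptual obstacle, is this last step: in general $\Delta'\subsetneq\Delta$ and $\Delta'$ need not be closed under taking subsets, so one must work exclusively with maximal elements rather than trying to prove the stronger equality $\Delta=\Delta'$.
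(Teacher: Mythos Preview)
Your argument is correct and follows exactly the route the paper indicates: the paper's ``proof'' consists only of the sentence pointing to Theorem~\ref{thm:annihilator} and \cite[Lemma~2.2]{Ya00}, and you have unpacked precisely how those two ingredients combine. The only thing you are adding beyond the paper is the explicit maximality comparison between the facets of $\Delta$ and the maximal elements of $\Delta'$, which is a genuine (if easy) detail the paper leaves implicit; your observation that $\Delta'$ need not be a simplicial complex (e.g.\ $M=S(-\mathbf{e}_1)$) is a nice sanity check justifying why you argue only at the level of maximal elements.
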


Proposition \ref{prop:sfq-support} will be essential for describing the components
of the support resonance loci of a simplicial complex in the next section.

We end this section with the following definition:

\begin{definition}
\label{defn:sqf-part}
For an $\N^n$-graded vector space $M$, the \emph{square-free part}\/ of $M$ is the
subspace $\sqf (M)\subseteq M$ concentrated in square-free multidegrees.
\end{definition}

\subsection{Stanley--Reisner rings}
\label{subsec:SR-ring}
Let $S=\k[x_1,\dots, x_n]$ be the polynomial ring in $n$
variables over a field $\k$ of characteristic $0$.
Given a simplicial complex $\Delta$ on $n$ vertices,
we let $\k[\Delta] \coloneqq S/I_{\Delta}$ be the (polynomial)
{\em Stanley--Reisner ring}\/ of $\Delta$, where $I_{\Delta}$ is the ideal
generated by the (square-free) monomials $x_{\sigma}=x_{i_1} \cdots x_{i_s}$
for all simplices $\sigma=(i_1,\dots, i_s)$ with $1\le i_1<\cdots <i_s \le n$
not in $\Delta$.
Similarly, we define the {\em exterior Stanley--Reisner ring}\/ of $\Delta$ as
$\SR \coloneqq E/J_{\Delta}$, where $E=\bwedge(e_1,\dots, e_n)$ is the exterior algebra
in $n$ variables over $\k$ and $J_{\Delta}$ is the ideal generated
by the monomials $e_{\sigma}=e_{i_1}\wedge \cdots \wedge e_{i_s}$
for all simplices $\sigma\notin \Delta$.

Consider the graded, graded-commutative $\k$-algebra $A^{\bullet}\coloneqq \SR$. 
As mentioned in Remark \ref{rem:grading}, this algebra is given the positive grading.
In each degree $d$, the vector space $A^d$ is spanned
by multivectors $e_\sigma$, where $\sigma$ is a $(d-1)$-dimensional
face of $\Delta$. Indeed, since $\sigma=(i_1,\ldots,i_s)\not\in\Delta$
implies $(i_1,\ldots,i_s,j)\not\in\Delta$ for all $j\not\in\Supp(\Delta)$,
it follows that in each degree $d$, the vector space $J_{\Delta,d}$
is spanned by the multivectors $e_\sigma$ with $\sigma\not\in\Delta$
of dimension $d-1$. With the notation of the previous sections, the
dual $A_d$ is generated by the vectors $v_\sigma$ with $\sigma\in\Delta$
being of dimension $d-1$.

For an element $a=\sum_{i=1}^{n} \lambda_i e_i\in A^1$,
let $(A^{\bullet}, \delta_a)$ be the cochain complex from \eqref{eq:cc}.
As shown in \cite[Proposition 4.3]{AAH} (see also \cite[Lemma 3.4]{PS-adv09}),
this complex depends only on $\Supp(a)\coloneqq \{i\mid \lambda_i\ne 0\}$;
more precisely, $(A^{\bullet}, \delta_a)$ is isomorphic to $(A^{\bullet}, \delta_{\bar{a}})$,
where $\bar{a}=\sum_{i\in \Supp(a)} e_i$. The following Hochster-type
formula from \cite[Proposition 4.3]{AAH}, suitably interpreted and corrected
in \cite[Proposition 3.6]{PS-adv09}, describes the cohomology groups
of the cochain complexes $(A^{\bullet}, \delta_a)$.

\begin{proposition}[\cites{AAH, PS-adv09}]
\label{prop:aah-hochster}
Let $\Delta$ be a finite simplicial complex on vertex set $\mathsf{V}=[n]$
and $a\in A^1$ as above. Writing $\mathsf{V}'=\Supp(a)$, we have
\[
\dim_{\k} H^{i}\bigl(\SR,\delta_a\bigr)=\sum_{\sigma\in \Delta_{\mathsf{V}\setminus \mathsf{V}'}}
\dim_{\k} \widetilde{H}_{i-1-\abs{\sigma}} \bigl(\lk_{\Delta_{\mathsf{V}'}}(\sigma); \k\bigr).
\]
\end{proposition}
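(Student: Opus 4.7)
The plan is to reduce to the uniform case $a=\bar a\coloneqq \sum_{i\in\mathsf{V}'}e_i$, split the cochain complex $(\SR,\delta_{\bar a})$ according to how each face of $\Delta$ meets the ``outer'' vertex set $\mathsf{V}\setminus\mathsf{V}'$, and identify each summand with a reduced simplicial cochain complex of a link in $\Delta_{\mathsf{V}'}$. Step~1 is immediate from the invariance $(A^{\bullet},\delta_a)\cong(A^{\bullet},\delta_{\bar a})$ recalled before the statement, so from now on I take $a=\bar a$.

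For Step~2 I would write each face $\tau\in\Delta$ uniquely as $\tau=\sigma\sqcup\rho$ with $\sigma=\tau\cap(\mathsf{V}\setminus\mathsf{V}')\in\Delta_{\mathsf{V}\setminus\mathsf{V}'}$ and $\rho\subseteq\mathsf{V}'$, so that (up to sign) $e_\tau=e_\sigma\wedge e_\rho$. Because $\delta_{\bar a}(e_\sigma\wedge e_\rho)=\pm\, e_\sigma\wedge\bigl(\sum_{i\in\mathsf{V}'}e_i\wedge e_\rho\bigr)$ keeps $\sigma$ fixed and only enlarges $\rho$ by a vertex of $\mathsf{V}'$, the cochain complex splits as
\begin{equation*}
(\SR,\delta_{\bar a})=\bigoplus_{\sigma\in\Delta_{\mathsf{V}\setminus\mathsf{V}'}} C^{\bullet}_\sigma,
\end{equation*}
where $C^{\bullet}_\sigma$ has basis $\{e_\sigma\wedge e_\rho:\rho\in\lk_{\Delta_{\mathsf{V}'}}(\sigma)\cup\{\emptyset\}\}$. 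Then the assignment $e_\sigma\wedge e_\rho\mapsto e_\rho$ should identify $C^{\bullet}_\sigma$, cohomologically shifted by $\abs{\sigma}$, with the ``uniform'' complex $(\k\langle\lk_{\Delta_{\mathsf{V}'}}(\sigma)\rangle,\delta_{\sum_{i\in\mathsf{V}'}e_i})$ attached to the link.

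Step~3 is the heart of the argument: I would show that for any simplicial complex $\Delta'$ on a vertex set $\mathsf{W}$,
\begin{equation*}
\dim_{\k} H^{j}\bigl(\k\langle\Delta'\rangle,\delta_{\sum_{i\in\mathsf{W}}e_i}\bigr)=\dim_{\k}\widetilde{H}_{j-1}(\Delta';\k).
\end{equation*}
The idea is that $e_\rho\mapsto\rho^{\ast}$ (dual basis element in the augmented simplicial cochain complex) defines a degree~$-1$ linear isomorphism intertwining $\delta_{\sum e_i}$ with the simplicial coboundary, since $\delta_{\sum e_i}(e_\rho)=\sum_{i\notin\rho,\,\rho\cup\{i\}\in\Delta'}e_i\wedge e_\rho$ matches $\delta\rho^{\ast}=\sum [\rho\cup\{i\}:\rho]\,(\rho\cup\{i\})^{\ast}$ under a consistent sign choice dictated by a fixed total order on $\mathsf{W}$; over a field, reduced homology and cohomology have the same dimension, which closes the argument. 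Combining the three steps, $C^{\bullet}_\sigma$ contributes $\widetilde{H}_{i-1-\abs{\sigma}}(\lk_{\Delta_{\mathsf{V}'}}(\sigma);\k)$ to $H^{i}(\SR,\delta_a)$, and summing over $\sigma\in\Delta_{\mathsf{V}\setminus\mathsf{V}'}$ yields the claimed formula. The main obstacle will be the careful bookkeeping of signs in Steps~2 and~3, but once a total order on $\mathsf{V}$ is fixed and consistently applied to the exterior algebra and to simplicial incidence numbers, this becomes routine.
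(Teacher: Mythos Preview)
The paper does not actually supply a proof of this proposition; it quotes the result from \cite[Proposition~4.3]{AAH} (as reinterpreted in \cite[Proposition~3.6]{PS-adv09}) and only explains the notation afterward. So there is no ``paper's own proof'' to compare against.

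That said, your three-step argument is correct and is essentially the approach one finds in those references. The reduction to $\bar a$ is exactly what the paper records just before the statement; the direct-sum decomposition indexed by $\sigma\in\Delta_{\mathsf{V}\setminus\mathsf{V}'}$ is forced by the observation that multiplication by $\sum_{i\in\mathsf{V}'}e_i$ never touches the $\mathsf{V}\setminus\mathsf{V}'$-part of a monomial; and the identification of each summand with the augmented cochain complex of the link (shifted by $\abs{\sigma}+1$) is the Hochster-type step. Your only loose end is the sign bookkeeping you already flag: once you fix the total order on $\mathsf{V}$ and write $e_\tau=\pm\,e_\sigma\wedge e_\rho$ with the sign given by the shuffle, the map $e_\sigma\wedge e_\rho\mapsto e_\rho$ is a genuine chain isomorphism onto $(\k\langle\lk_{\Delta_{\mathsf{V}'}}(\sigma)\rangle,\delta_{\bar a})[\,\abs{\sigma}\,]$, and the identification in Step~3 with $\widetilde{C}^{\bullet-1}$ is on the nose. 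Nothing is missing.
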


Here $\Delta_{\mathsf{V}'}\coloneqq \{\tau\in \Delta\mid \tau \subset \mathsf{V}'\}$ is the
simplicial complex obtained by restricting $\Delta$ to $\mathsf{V}'$ and
$\lk_{\Delta_{\mathsf{V}'}}(\sigma)\coloneqq
\{\tau \in \Delta_{\mathsf{V}'} \mid \tau\cup \sigma \in \Delta\}$
is the link of a simplex $\sigma$ in $\Delta_{V'}$. The range of summation
in the above formula includes the empty simplex,
with the convention that $\abs{\emptyset}=0$ and
$\widetilde{H}_{-1}(\emptyset; \k)=\k$.

\subsection{Koszul modules of a simplicial complex}
\label{subsec:K-simp}

Fix a basis $ v_1, \ldots, v_n$ of the $\k$-vector space $V$.
Let $\mathbf{K}_\bullet$ denote the Koszul complex of $x_1, \ldots, x_n$,
whose $i$-th free module is $\mathbf{K}_i = \bwedge^i V \otimes_{\k} S$,
and set $\deg(v_i) = \mathbf{e}_i \in \N^n$. Then  $\mathbf{K}_\bullet$ is
a complex of $\N^n$-graded square-free $S$-modules.

A simplicial complex $\Delta$ on vertex set $[n]=\{1, \ldots, n\}$ determines a
subcomplex $\mathbf{K}^\Delta_\bullet$ of $\mathbf{K}_\bullet$, whose
$i$-th module $\mathbf{K}_i^\Delta$ is the free $S$-module generated by
the exterior monomials $v_{j_1}\wedge \cdots \wedge v_{j_i}$ such that
$\{j_1, \ldots, j_i\}$ is a face of $\Delta$.
Applying (\ref{eq:wi-a}), the $i$-th \emph{Koszul module}\/ $\Wb_i(\Delta)$ defined
as the $i$-th Koszul module of the exterior Stanley--Reisner ring of $\Delta$ is the
$i$-th homology $H_i(\mathbf{K}^\Delta_\bullet)$.

\begin{proposition}
\label{prop:koszul-sqf}
For every simplicial complex $\Delta$ on $n$ vertices and for every $i$,
the Koszul module $\Wb_i(\Delta)$ is an $\N^n$-graded square-free $S$-module.
\end{proposition}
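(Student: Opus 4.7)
The plan is to recognize that $\Wb_i(\Delta)$ is the homology of a bounded complex of free square-free $\N^n$-graded $S$-modules, and then to invoke Corollary \ref{cor:Nn-sqf-homology} directly.

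First I would set up the multigrading carefully. The polynomial ring $S$ carries the standard $\N^n$-multigrading with $\deg(x_i)=\mathbf{e}_i$. Assigning $\deg(v_i)=\mathbf{e}_i$ as well, the ambient Koszul complex $\mathbf{K}_\bullet=\bwedge^\bullet V\otimes_{\k} S$ becomes a complex of $\N^n$-graded $S$-modules, since the Koszul differential $v_{j_1}\wedge\cdots\wedge v_{j_i}\mapsto \sum_k (-1)^{k-1} x_{j_k}\, v_{j_1}\wedge\cdots\wedge\widehat{v_{j_k}}\wedge\cdots\wedge v_{j_i}$ preserves multidegree. Each $\mathbf{K}_i = \bwedge^i V\otimes_{\k} S$ is free with basis in bijection with the $i$-subsets of $[n]$, where the generator $v_{j_1}\wedge\cdots\wedge v_{j_i}$ sits in multidegree $\mathbf{e}_{j_1}+\cdots+\mathbf{e}_{j_i}$, a square-free multi-index. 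Hence $\mathbf{K}_i$ is a free square-free $S$-module in the sense of the remark following Definition~\ref{defn:Nn-square-free}.

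Next I would pass to the subcomplex $\mathbf{K}^\Delta_\bullet$. Its $i$-th term $\mathbf{K}^\Delta_i$ is generated precisely by those $v_{j_1}\wedge\cdots\wedge v_{j_i}$ with $\{j_1,\ldots,j_i\}\in\Delta$; these generators still lie in square-free multidegrees, so $\mathbf{K}^\Delta_i$ is again a free square-free $\N^n$-graded $S$-module. The differential $\partial_i\colon \mathbf{K}^\Delta_i\to\mathbf{K}^\Delta_{i-1}$ is the restriction of the ambient Koszul differential (this is well-defined because $\Delta$ is closed under taking faces, so every codimension-one face of a simplex in $\Delta$ is again in $\Delta$) and therefore is $\N^n$-graded.

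Finally, $\mathbf{K}^\Delta_\bullet$ is a bounded (in fact, finite) complex of free $\N^n$-graded square-free $S$-modules whose differentials are $\N^n$-graded $S$-linear maps, so Corollary \ref{cor:Nn-sqf-homology} applies and yields that every homology module $\Wb_i(\Delta)=H_i(\mathbf{K}^\Delta_\bullet)$ is a square-free $\N^n$-graded $S$-module, as required. No serious obstacle is expected: the only substantive verification is that with the chosen multigrading conventions the Koszul differential is $\N^n$-homogeneous and that faces of faces are faces, both of which are immediate from the definitions.
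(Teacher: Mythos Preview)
Your proposal is correct and follows exactly the same approach as the paper: observe that $\mathbf{K}^\Delta_\bullet$ is a bounded complex of free $\N^n$-graded square-free $S$-modules and apply Corollary~\ref{cor:Nn-sqf-homology}. The paper's proof is simply a two-sentence version of what you wrote, omitting the explicit verification that the Koszul differential is multigraded and that the generators sit in square-free multidegrees.
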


\begin{proof}
The subcomplex $\mathbf{K}^\Delta_\bullet$ is a complex of
$\N^n$-graded square-free $S$-modules.
By Corollary \ref{cor:Nn-sqf-homology}, it follows that each
homology vector space $\Wb_i(\Delta)$ is a square-free $S$-module.
\end{proof}

\section{Hilbert series for Koszul modules of simplicial complexes}
\label{sect:reg-hilbert}

We fix some notation first. For a multidegree $\mathbf{b}$, we
denote the sum of its entries by  $\abs{\mathbf{b}}$.
For a square-free multidegree $\mathbf{b}\in \N^n$,
we denote by $\Delta_\mathbf{b}$ the restriction of the simplicial complex
$\Delta$ to the subset of the vertices $\Supp(\mathbf{b}) \subseteq \{1, \ldots, n  \}$.

We denote by $\Tilde{h}_i(-; \k)$ and $\Tilde{h}^i(-; \k)$ the dimensions of the simplicial
homology groups $\Tilde{H}_i(-; \k)$ and of the reduced cohomology groups
$\Tilde{H}^i(-; \k)\cong \Tilde{H}_i(-; \k)^\vee$  with
coefficients in $\k$, respectively.

\subsection{Koszul modules vs. Koszul (co)homology}
\label{subseq:K vs K}
We establish a duality result between the Koszul modules associated
to a simplicial complex and Koszul (co)homology of the symmetric 
Stanley--Reisner algebra.

\begin{theorem}
\label{thm:KoszModules-KoszCohom}
For any $i\ge 1$ and  any square-free multi-index $\mathbf{b}$, there are
natural isomorphisms of vector spaces
\begin{equation}
	\label{eqn:Mod-Tor}
	\left[\Wb_i(\Delta)\right]_\mathbf{b}
	\cong \left[\Tor^S_{|\mathbf{b}|-i}
	(\k, \k[\Delta])\right]_\mathbf{b}^\vee\cong \widetilde{H}^{i-1}(\Delta_\mathbf{b}; \k)^\vee\cong
	\widetilde{H}_{i-1}(\Delta_\mathbf{b}; \k).
\end{equation}	
\end{theorem}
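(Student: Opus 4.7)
The plan is to prove the first claimed isomorphism by computing both sides directly as simplicial (co)chain complexes of the restricted subcomplex $\Delta_\mathbf{b}$; the remaining isomorphisms will then follow from the classical Hochster formula and from finite-dimensional vector space duality, respectively.

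First I would analyze the multi-graded component $[\mathbf{K}^\Delta_\bullet]_\mathbf{b}$ for a square-free $\mathbf{b}$. For each face $\sigma\in\Delta_\mathbf{b}$ of size $i$ the space $[\mathbf{K}^\Delta_i]_\mathbf{b}$ has a single one-dimensional contribution, namely $\k\cdot x^{\mathbf{b}-\mathbf{e}_\sigma}\otimes v_\sigma$. Under the identification $x^{\mathbf{b}-\mathbf{e}_\sigma}\otimes v_\sigma\leftrightarrow\sigma$, the Koszul differential $d(v_{j_1}\wedge\cdots\wedge v_{j_i})=\sum_k(-1)^{k-1}x_{j_k}\,v_{j_1}\wedge\cdots\widehat{v_{j_k}}\cdots\wedge v_{j_i}$ becomes verbatim the simplicial boundary of $\Delta_\mathbf{b}$, with $[\mathbf{K}^\Delta_0]_\mathbf{b}=\k\cdot x^\mathbf{b}$ playing the role of the empty simplex. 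Hence $[\mathbf{K}^\Delta_\bullet]_\mathbf{b}$ is canonically isomorphic to the augmented simplicial chain complex $\widetilde{C}_{\bullet-1}(\Delta_\mathbf{b};\k)$, and passing to homology yields $[\Wb_i(\Delta)]_\mathbf{b}\cong\widetilde{H}_{i-1}(\Delta_\mathbf{b};\k)$ for $i\ge 1$.

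Next I would invoke the classical Hochster formula $[\Tor^S_j(\k,\k[\Delta])]_\mathbf{b}\cong\widetilde{H}^{|\mathbf{b}|-j-1}(\Delta_\mathbf{b};\k)$ for square-free $\mathbf{b}$, which is proved by the entirely parallel computation of $[\mathbf{K}_\bullet\otimes_S\k[\Delta]]_\mathbf{b}$: in multidegree $\mathbf{b}$ the basis is indexed by pairs $(\tau,\sigma)$ with $\tau\sqcup\sigma=\Supp(\mathbf{b})$ and $\sigma\in\Delta_\mathbf{b}$, while the Koszul differential acts as the simplicial coboundary of $\Delta_\mathbf{b}$. Specialising to $j=|\mathbf{b}|-i$ and dualising yields $[\Tor^S_{|\mathbf{b}|-i}(\k,\k[\Delta])]_\mathbf{b}^\vee\cong\widetilde{H}^{i-1}(\Delta_\mathbf{b};\k)^\vee$, and the final isomorphism $\widetilde{H}^{i-1}(\Delta_\mathbf{b};\k)^\vee\cong\widetilde{H}_{i-1}(\Delta_\mathbf{b};\k)$ is built into the conventions fixed at the start of Section~\ref{sect:reg-hilbert}. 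Combining the three identifications gives the full chain of isomorphisms asserted in the theorem.

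The main technical care required is bookkeeping with signs and indices: one must fix an ordering of the vertices so that the Koszul differential matches the simplicial boundary (and, on the other side, coboundary) exactly, and verify that the lowest piece of the Koszul subcomplex realises the reduced augmentation, so that the shift between homological degree $i$ and simplicial dimension $i-1$ lands on the right side of $\widetilde{H}_{i-1}$. The conceptual observation behind the theorem is modest: restricting to a square-free multidegree collapses both the Koszul subcomplex computing $\Wb_\bullet(\Delta)$ and the Koszul resolution of $\k$ tensored with $\k[\Delta]$ to simplicial (co)chain complexes of the \emph{same} restricted subcomplex $\Delta_\mathbf{b}$, from which all four vector spaces become canonically isomorphic.
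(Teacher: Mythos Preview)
Your argument is correct, but it proceeds along a different route from the paper's. You go straight to the last term in the chain: restricting the complex $\mathbf{K}^\Delta_\bullet$ to the square-free multidegree $\mathbf{b}$ yields exactly the augmented simplicial chain complex of $\Delta_\mathbf{b}$ (shifted by one), so $[\Wb_i(\Delta)]_\mathbf{b}\cong\widetilde{H}_{i-1}(\Delta_\mathbf{b};\k)$ directly, and Hochster then connects this to the $\Tor$ term. The paper instead establishes the \emph{first} isomorphism $[\Wb_i(\Delta)]_\mathbf{b}\cong[\Tor^S_{|\mathbf{b}|-i}(\k,\k[\Delta])]_\mathbf{b}^\vee$ by a longer chain of manipulations: it uses square-freeness to replace $S_j$ by $\sqf(S_j)\cong\bigwedge^j V^\vee$, dualizes, identifies $A^i=\sqf(\k[\Delta])_i$, drops the $\sqf$ again, and only then recognizes the Koszul resolution of $\k$ tensored with $\k[\Delta]$; Hochster is invoked afterwards for the second isomorphism. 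Your approach is more elementary and transparent, avoiding the exterior/symmetric duality gymnastics entirely; the paper's approach has the mild advantage of producing the isomorphism to $\Tor^\vee$ by an explicit identification of complexes rather than by meeting at the common target $\widetilde{H}_{i-1}(\Delta_\mathbf{b};\k)$.
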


\begin{proof}
For the square-free multidegree $\mathbf{b}$, we denote $j\coloneqq \abs{\mathbf{b}} - i$.

We start by proving the first isomorphism.
We use the notation from Section \ref{subsec:SR-ring}. Let $A_d\subseteq \bwedge^d V$
be the subspace generated by the exterior monomials $v_\sigma$ such that
$\sigma$ is a face of $\Delta$.
Denote by $S_d$ the graded component of $S=\Sym(V)$
of total degree $d$. Then, the vector space $[\Wb_i(\Delta)]_\mathbf{b}$
is the middle homology of the complex of vector spaces
	\begin{equation*}
		\label{eq:seq1}
		\begin{tikzcd}[column sep=16pt]
			{[A_{i+1}\otimes S_{j-1}]}_\mathbf{b}
			\ar[r] &
			{[A_{i}\otimes S_{j}]}_\mathbf{b}
			\ar[r] &
			{[A_{i-1}\otimes S_{j+1}]}_\mathbf{b}.
		\end{tikzcd}
	\end{equation*}
By Proposition \ref{prop:koszul-sqf}, this complex is the same as
	\begin{equation*}
		\label{eq:seq2}
		\begin{tikzcd}[column sep=16pt]
			{[A_{i+1}\otimes \sqf(S_{j-1})]}_\mathbf{b}
			\ar[r]&
			{[A_{i}\otimes \sqf(S_{j})]}_\mathbf{b}
			\ar[r]&
			{[A_{i-1}\otimes \sqf(S_{j+1})]}_\mathbf{b}.
		\end{tikzcd}
	\end{equation*}
Upon identifying  $\sqf(S_{d})= \bigwedge^d V^\vee$ via the $\k$-linear map given by 
$x_{i_1}\cdots x_{i_d}\mapsto e_{i_1}\wedge\cdots\wedge e_{i_d}$, 
this chain complex may be written as
	\begin{equation}
		\begin{tikzcd}[column sep=18pt]
			\left[A_{i+1}\otimes \bigwedge^{j-1} V^\vee\right]_\mathbf{b}
			\ar[r]&
			\left[A_{i}\otimes \bigwedge^{j} V^\vee\right]_\mathbf{b}
			\ar[r]&
			\left[A_{i-1}\otimes \bigwedge^{j+1} V^\vee\right]_\mathbf{b},
		\end{tikzcd}
	\end{equation}
	which, by dualization gives
		\begin{equation}
			\label{eq:seq3}
		\begin{tikzcd}[column sep=18pt]
			\left[A^{i-1}\otimes \bigwedge^{j+1} V\right]_\mathbf{b}
			\ar[r]&
			\left[A^{i}\otimes \bigwedge^{j} V\right]_\mathbf{b}
			\ar[r]&
			\left[A^{i+1}\otimes \bigwedge^{j-1} V\right]_\mathbf{b}.
		\end{tikzcd}
	\end{equation}

As usual, all these identifications are made without changing the positivity of the grading.
After having identified $A^d = \sqf(S/I_\Delta)_d$, the sequence \eqref{eq:seq3}
may be written as
	\begin{equation*}
		\label{eq:seq4}
		\begin{tikzcd}[column sep=16pt]
			\left[\sqf(S/I_\Delta)_{i-1}\otimes \bigwedge^{j+1} V\right]_\mathbf{b}
			\ar[r]&
			\left[\sqf(S/I_\Delta)_{i}\otimes \bigwedge^{j} V\right]_\mathbf{b}
			\ar[r]&
			\left[\sqf(S/I_\Delta)_{i+1}\otimes \bigwedge^{j-1} V\right]_\mathbf{b}.
		\end{tikzcd}
	\end{equation*}
	Since $\mathbf{b}$ is a square-free multidegree, this complex is the same as
	\begin{equation}
		\label{eq:seq5}
		\begin{tikzcd}[column sep=16pt]
			\left[(S/I_\Delta)_{i-1}\otimes \bigwedge^{j+1} V\right]_\mathbf{b}
			\ar[r]&
			\left[(S/I_\Delta)_{i}\otimes \bigwedge^{j} V\right]_\mathbf{b}
			\ar[r]&
			\left[(S/I_\Delta)_{i+1}\otimes \bigwedge^{j-1} V\right]_\mathbf{b}.
		\end{tikzcd}
	\end{equation}
	
By the properties of the Koszul complex, the middle cohomology of this complex
is isomorphic to
\[
\left[\Tor^S_{j} (\k, \k[\Delta])\right]_\mathbf{b},
\]
and this concludes the proof of the first isomorphism.

For the second isomorphism, note that
$\left[\Tor^S_j(\k, \k[\Delta])\right]_\mathbf{b}$ is isomorphic to
$\left[\Tor^S_{j-1}(\k, I_\Delta)\right]_\mathbf{b}$. Indeed, for $j\ge 2$,
this is clear, whereas for $j=1$ this follows from the fact the $I_\Delta$ is contained
in the ideal generated by the variables, and hence $\k\cong\k\otimes_S \k[\Delta]$.
From \cite[Proof of Theorem 8.1.1]{HerzogHibi} we obtain an isomorphism
$\left[\Tor^S_{j-1}(\k, I_\Delta)\right]_\mathbf{b}\cong \tilde{H}^{|\mathbf{b}|-j-1}(\Delta_\mathbf{b}; \k)$.
In conclusion,
\begin{equation}
	[\Wb_i(\Delta)]_\mathbf{b}\cong \tilde{H}^{i-1}(\Delta_\mathbf{b}; \k)^\vee,
\end{equation}
as soon as $|\mathbf{b}|-i\ge 1$.
\end{proof}

\begin{remark}
\label{rem:HH}
The isomorphism in the statement of Theorem \ref{thm:KoszModules-KoszCohom}
does not necessarily hold if we drop the
hypothesis that $\mathbf{b}$ is square-free. Indeed, $\left[\Tor^S_{|\mathbf{b}|-i}
(\k, \k[\Delta])\right]_\mathbf{b}$ is equal to $0$ if $\mathbf{b}$ is not square-free,
\cite[Theorem 8.1]{HerzogHibi}.
On the other hand, since the square-free multi-indices are finitely many, the vanishing of
$\left[\Wb_i(\Delta)\right]_\mathbf{b}$ for all $\mathbf{b}$ that is not square-free implies
$\Wb_i(\Delta)$ is of finite length.
\end{remark}

An alternate, less explicit proof of the above theorem can be obtained
by applying the Bernstein--Gelfand--Gelfand
correspondence to express $[\Wb_i(\Delta)]_\mathbf{b}$ as the (duals) of some $\Tor$ spaces
over the exterior algebra, and then apply a theorem of Aramova, Avramov, and Herzog \cite{AAH},
see \cite[Corollary 7.5.2]{HerzogHibi}. More precisely, we have the following result.

\begin{proposition}
\label{prop:BGG}
For any $i\ge 1$ and any square-free multi-index $\mathbf{b}$, there is a natural isomorphism
of vector spaces
	\begin{equation}
		\label{eqn:Mod-Tor-E}
		\left[\Wb_i(\Delta)\right]_\mathbf{b}
		\cong \left[\Tor^E_{|\mathbf{b}|-i}
		\bigl(\k, \SR\bigr)\right]_\mathbf{b}^\vee.
	\end{equation}	
\end{proposition}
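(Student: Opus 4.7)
The plan is to deduce the isomorphism from the BGG correspondence, by realizing the chain complex $(A_\bullet \otimes_\k S, \partial)$ that computes the Koszul modules $\Wb_i(\Delta)$ as the $\k$-linear dual of a complex computing $\Tor^E(\k, \SR)$, via a Cartan-type resolution of $\k$ over the exterior algebra.

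First I would observe that the cochain complex $(A^\bullet \otimes_\k S, \delta_A)$ from \eqref{eq:cc-s} is the BGG complex $\mathbf{L}(A^\bullet)$ associated to the graded $E$-module $A^\bullet = \SR$, and that $(A_\bullet \otimes_\k S, \partial)$ is its $\k$-linear dual, taken multidegree by multidegree. Thus $[\Wb_i(\Delta)]_\mathbf{b}$ is the $\k$-dual of the $i$-th cohomology of the BGG complex in multidegree $\mathbf{b}$.

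Next I would invoke the Cartan-type minimal free resolution $F_\bullet \to \k$ of $\k$ as a graded $E$-module, available in characteristic zero: $F_p = E \otimes_\k \Sym^p V$ with $E$-linear differential sending $1 \otimes v^\alpha$ to $\sum_j v_j \otimes \partial_{v_j}(v^\alpha)$. Tensoring over $E$ with $\SR$ yields the chain complex $(\Sym^\bullet V \otimes_\k \SR, D)$ with $D(\omega \otimes u) = \sum_j \partial_{v_j}(\omega) \otimes v_j \cdot u$, whose $p$-th homology in total internal degree $d$ computes $[\Tor^E_p(\k, \SR)]_d$. The entire construction is $\N^n$-multigraded, so restricting to a chosen multidegree $\mathbf{b}$ with $|\mathbf{b}| = d$ preserves the computation.

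The heart of the plan is to identify, in fixed internal degree $d$, this Cartan-based complex with the $\k$-linear dual of $(A_\bullet \otimes_\k S, \partial)$. Using the natural characteristic-zero isomorphisms $\Sym^p(V^\vee)^\vee \cong \Sym^p V$ and $(A^i)^\vee \cong A_i$, together with the dual pairing $\langle x_j, v_k \rangle = \delta_{jk}$, I would check that the dual of multiplication by $x_j$ on $S$ is the partial derivative $\partial_{v_j}$ on $\Sym^\bullet V$, and that the dual of the contraction $\iota_{e_j}\colon A_i \to A_{i-1}$ is exterior multiplication by $e_j\colon A^{i-1} \to A^i$. Consequently, $\partial(v \otimes s) = \sum_j \iota_{e_j}(v) \otimes x_j s$ dualizes term-by-term to $D$, with Cartan homological index $p$ corresponding to cohomological index $d - p$ in the dualized Koszul complex. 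Setting $d = |\mathbf{b}|$ and $p = |\mathbf{b}| - i$ then yields the claimed isomorphism.

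I expect the main obstacle to be the careful bookkeeping of gradings and signs in this final identification: confirming that the sign conventions in $\iota_{e_j}$, in the Cartan differential, and in the positive grading on $E$ adopted in Remark~\ref{rem:grading} are all mutually consistent, so that the dual of $\partial$ agrees on the nose with $D$ under the identifications above. Once this is in hand, the proposition is a direct translation between two equivalent ways of computing the same object.
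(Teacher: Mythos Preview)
Your approach is correct and is exactly what the paper's one-line proof gestures at: the paper simply cites the (multigraded) BGG correspondence from \cite{Ei-syz} and \cite{Brown-Erman}, and your Cartan-resolution argument is the standard explicit realization of that correspondence.

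One notational slip to fix, which is precisely the bookkeeping you flag as the main obstacle: in the paper's conventions $S=\Sym V$ and $E=\bigwedge V^\vee$, so the Cartan resolution of $\k$ over $E$ has $F_p=E\otimes_\k\Sym^p V^\vee$ (not $\Sym^p V$) with differential $1\otimes e^\alpha\mapsto\sum_j e_j\otimes\partial_{e_j}(e^\alpha)$; tensoring with $\SR$ gives $\Sym^\bullet V^\vee\otimes_\k\SR$ with $D(\omega\otimes u)=\sum_j\partial_{e_j}(\omega)\otimes e_j\cdot u$. It is only after $\k$-dualizing in a fixed internal degree that $\Sym^p V=S_p$ and $A_i$ appear and match the terms of $(A_\bullet\otimes_\k S,\partial)$. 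With that adjustment your dualities (multiplication by $e_j$ on $A^\bullet$ $\leftrightarrow$ contraction $\iota_{e_j}$ on $A_\bullet$; $\partial_{e_j}$ on $\Sym V^\vee$ $\leftrightarrow$ multiplication by $x_j$ on $S$) line up and the identification goes through---in fact for every multidegree $\mathbf{b}$, not only square-free ones.
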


The proof of the proposition follows from an adaptation to the multi-graded
context \cite{Brown-Erman} of the classical BGG correspondence, as
described in \cite{Ei-syz}.

\subsection{Multigraded Hilbert series}
\label{subsec:hilb}

Our next goal is to determine the Hilbert series of the Koszul modules $\Wb_i(\Delta)$ 
associated to a simplicial complex $\Delta$. For a multidegree $\mathbf{a}=(a_1, \ldots, a_n)\in \N^n $,
we will write $\mathbf{t}^\mathbf{a} \coloneqq t_1^{a_1 }\cdots t_n^{a_n}$.

\begin{theorem}
\label{thm:hilb-wd}
For every simplicial complex $\Delta$ and every $i>0$,
the  $\N^n$-graded Hilbert series of the Koszul module 
$\Wb_i(\Delta)$ is given by
\begin{equation*}
\label{eq:hilb-koszul}
 \sum_{\mathbf{a}\in \N^n} \dim_{\k} [\Wb_i(\Delta)]_\mathbf{a} \,
\mathbf{t}^\mathbf{a}=
 \sum_{\substack{\mathbf{b}\,\in\, \N^n\\ \mathbf{b} \operatorname{square-free}}}
\dim_{\k}(\widetilde{H}_{i-1}(\Delta_\mathbf{b}; \k))
\frac{\mathbf{t}^\mathbf{b}}{\prod_{j \in \Supp(\mathbf{b})}(1-t_j)}.
\end{equation*}
\end{theorem}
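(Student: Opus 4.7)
The plan relies on two ingredients already established in the paper. Proposition \ref{prop:koszul-sqf} guarantees that $\Wb_i(\Delta)$ is a square-free $\N^n$-graded $S$-module, and the last chain of isomorphisms in Theorem \ref{thm:KoszModules-KoszCohom} identifies $[\Wb_i(\Delta)]_\mathbf{b} \cong \widetilde{H}_{i-1}(\Delta_\mathbf{b};\k)$ for every square-free multi-index $\mathbf{b}$. With these in hand, the theorem reduces to a purely combinatorial repackaging.

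First, I would prove a general Hilbert series formula for any finitely generated square-free $\N^n$-graded $S$-module $M$. The defining property in Definition \ref{defn:Nn-square-free} says that multiplication by $x_j$ is an isomorphism $M_\mathbf{a} \to M_{\mathbf{a}+\mathbf{e}_j}$ whenever $j \in \Supp(\mathbf{a})$. Iterating this along any chain of additions of basis vectors inside the support shows that if $\mathbf{b}$ denotes the square-free multi-index whose support equals $\Supp(\mathbf{a})$, then $M_\mathbf{b} \cong M_\mathbf{a}$; in particular, $\dim_\k M_\mathbf{a}$ depends only on $\Supp(\mathbf{a})$.

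Next, I would stratify the Hilbert series of $M$ by support. For each subset $\mathsf{V}' \subseteq [n]$ with indicator multi-index $\mathbf{b}$, the contribution from the set of $\mathbf{a} \in \N^n$ with $\Supp(\mathbf{a}) = \mathsf{V}'$ equals $\dim_\k M_\mathbf{b}$ times the geometric sum $\sum_{\mathbf{a}:\, \Supp(\mathbf{a})=\mathsf{V}'} \mathbf{t}^\mathbf{a}$, which factors as $\prod_{j \in \mathsf{V}'} \bigl(\sum_{c \ge 1} t_j^c\bigr) = \mathbf{t}^\mathbf{b}/\prod_{j \in \mathsf{V}'}(1-t_j)$. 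Summing over $\mathsf{V}'$ (equivalently over all square-free $\mathbf{b}$) yields the generic formula for $M$.

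Finally, specializing $M = \Wb_i(\Delta)$ and substituting the identification $\dim_\k [\Wb_i(\Delta)]_\mathbf{b} = \dim_\k \widetilde{H}_{i-1}(\Delta_\mathbf{b};\k)$ from Theorem \ref{thm:KoszModules-KoszCohom} immediately produces the stated identity. I do not foresee any substantive obstacle here: the only mildly non-formal step is verifying that dimensions of a square-free module depend only on support, and this is a direct iteration of Definition \ref{defn:Nn-square-free}.
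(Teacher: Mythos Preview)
Your proposal is correct and follows essentially the same route as the paper: reduce the Hilbert series to its square-free graded pieces using the square-freeness of $\Wb_i(\Delta)$, then plug in the identification from Theorem~\ref{thm:KoszModules-KoszCohom}. The paper records the first step as the single identity \eqref{eq:hilb-multi} ``by the definition of a square-free module,'' whereas you spell out why $\dim_\k M_\mathbf{a}$ depends only on $\Supp(\mathbf{a})$ and sum the resulting geometric series; the content is the same.
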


\begin{proof}
We begin by observing that, by the definition of a square-free module,
	we have
	\begin{equation}
		\label{eq:hilb-multi}
		\sum_{\mathbf{a}\in \N^n} \dim [\Wb_i(\Delta)]_\mathbf{a} \, \mathbf{t}^\mathbf{a}
		=
		\sum_{\substack{\mathbf{b}\in \N^n\\ \mathbf{b} \text{ square-free}}}
		\dim [\Wb_i(\Delta)]_\mathbf{b} \,
		\frac{\mathbf{t}^\mathbf{b}}{\prod_{j \in \Supp(\mathbf{b})}(1-t_j)} \, .
	\end{equation}
Thus, it suffices to determine $\dim [\Wb_i(\Delta)]_\mathbf{b}$
when $\mathbf{b}$ is a square-free multidegree.
Fix a square-free multidegree $\mathbf{b}$,
and let $j = \abs{\mathbf{b}} - i$.
From Theorem \ref{thm:KoszModules-KoszCohom}, we know that
\begin{equation}
\label{eq:wbi-Delta}
	[\Wb_i(\Delta)]_\mathbf{b}\cong \tilde{H}_{i-1}(\Delta_\mathbf{b}; \k).
\end{equation}
Therefore, 
\begin{equation}
\label{eq:hilb-multi-bis}
		\sum_{\mathbf{a}\in \N^n} \dim [\Wb_i(\Delta)]_\mathbf{a} \,
		\mathbf{t}^\mathbf{a}
		=
		\sum_{\substack{\mathbf{b}\in \N^n\\ \mathbf{b} \text{ square-free}}}
		\tilde{h}_{i-1}\bigl(\Delta_\mathbf{b}, \k\bigr)
		\frac{\mathbf{t}^\mathbf{b}}{\prod_{j \in \Supp(\mathbf{b})}(1-t_j)}, 
\end{equation}
and this completes the proof.
\end{proof}

Specializing to the single $\N$-grading, the above theorem yields the following formula 
for the Hilbert series of the Koszul module $\Wb_i(\Delta)$:
\begin{equation}
\label{eq:hilb-single}
\sum_{a\in \N} \dim [\Wb_i(\Delta)]_a \, t^a=
\sum_{\substack{\mathbf{b}\,\in\, \N^n\\ \mathbf{b}\: \operatorname{square-free}}}
\dim \bigl(\widetilde{H}_{i-1}(\Delta_\mathbf{b}; \k)\bigr) \left(\frac{t}{1-t}\right)^{\abs{\mathbf{b}}}.
\end{equation}

In the particular case when $\Delta$ has dimension at most $1$, that is, when $\Delta$ is equal
to a (simplicial) graph $\Gamma$, we recover the Hilbert series of the module
$W_\Gamma\coloneqq \Wb_1(\Gamma)(2)$, as computed in \cite[Theorem 4.1]{PS-mathann}.

\begin{corollary}[\cite{PS-mathann}]
\label{thm:chen-raag}
For a graph $\Gamma$ on vertex set $\mathsf{V}$, we have
\begin{equation*}
\label{eq:hilb-graph}
	\Hilb(W_{\Gamma},t )= \frac{1}{t^2}\cdot Q_{\Gamma}\Big( \frac{t}{1-t} \Big),
\end{equation*}
where $Q_{\Gamma}(t)=\sum_{j\ge 2} c_j(\Gamma) t^j$ and
$c_j(\G)=\sum_{\mathsf{V}'\subseteq \mathsf{V}\colon  \abs{\mathsf{V}'}=j }
\tilde{h}_0(\Gamma_{\mathsf{V}'})$.
\end{corollary}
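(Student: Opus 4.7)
The plan is to derive the corollary as a direct specialization of the single-graded Hilbert-series formula \eqref{eq:hilb-single} to the case $i=1$ and $\Delta = \Gamma$, and then account for the degree shift $W_{\Gamma} = \Wb_1(\Gamma)(2)$. Setting $i=1$ in \eqref{eq:hilb-single} gives
\[
\Hilb\bigl(\Wb_1(\Gamma),t\bigr) = \sum_{\substack{\mathbf{b}\,\in\, \N^n\\ \mathbf{b}\: \operatorname{square-free}}} \tilde{h}_0\bigl(\Gamma_\mathbf{b};\k\bigr)\left(\frac{t}{1-t}\right)^{\abs{\mathbf{b}}}.
\]

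Next, I would reindex this sum. Each square-free multi-index $\mathbf{b}\in\N^n$ corresponds bijectively to its support $\mathsf{V}'=\Supp(\mathbf{b})\subseteq\mathsf{V}$, under which $\abs{\mathbf{b}}=\abs{\mathsf{V}'}$ and $\Gamma_\mathbf{b}=\Gamma_{\mathsf{V}'}$. Hence the right-hand side equals $\sum_{\mathsf{V}'\subseteq\mathsf{V}} \tilde{h}_0(\Gamma_{\mathsf{V}'})\bigl(t/(1-t)\bigr)^{\abs{\mathsf{V}'}}$. Now the terms with $\abs{\mathsf{V}'}\le 1$ drop out: on the empty subset $\tilde{H}_0(\emptyset)=0$, and on any singleton the restricted graph is connected so $\tilde{h}_0$ vanishes. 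Grouping the remaining summands by $j=\abs{\mathsf{V}'}\ge 2$ and recognizing the definition of $c_j(\Gamma)$ yields
\[
\Hilb\bigl(\Wb_1(\Gamma),t\bigr)=\sum_{j\ge 2} c_j(\Gamma)\left(\frac{t}{1-t}\right)^{j} = Q_{\Gamma}\!\left(\frac{t}{1-t}\right).
\]

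Finally, I would apply the convention $W_{\Gamma}=\Wb_1(\Gamma)(2)$, under which $[W_\Gamma]_d=[\Wb_1(\Gamma)]_{d+2}$, so $\Hilb(W_{\Gamma},t)=t^{-2}\Hilb(\Wb_1(\Gamma),t)$, giving the claimed identity. There is no real obstacle here; the only points that need care are the bookkeeping of the shift (to produce the $1/t^2$ factor) and the verification that $\tilde{h}_0$ vanishes on subsets of size $\le 1$, so that the summation can legitimately be restricted to $j\ge 2$ and matches the definition of $Q_\Gamma$.
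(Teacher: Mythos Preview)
Your proposal is correct and follows exactly the approach indicated in the paper: the corollary is obtained by specializing formula \eqref{eq:hilb-single} to $i=1$ and $\Delta=\Gamma$, reindexing square-free $\mathbf{b}$ by their supports $\mathsf{V}'\subseteq\mathsf{V}$, and accounting for the shift $W_{\Gamma}=\Wb_1(\Gamma)(2)$. The paper itself does not spell out the details you give (vanishing of $\tilde{h}_0$ for $\abs{\mathsf{V}'}\le 1$ and the $t^{-2}$ factor from the shift), but these are precisely the bookkeeping steps implicit in its one-line derivation.
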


The significance of the above formula is that it gives the Chen ranks of the
right-angled Artin group $G_{\Gamma}$ associated to the graph $\Gamma$.

\section{Resonance varieties of a simplicial complex}
\label{sec:simp-comp}

Given an (abstract) simplicial complex $\Delta$ on vertex set $\mathsf{V}$,
we define its resonance varieties as those of the corresponding
exterior Stanley--Reisner ring. That is, we put $\RR^i(\Delta)\coloneqq \RR^i(\SR)$
for the jump resonance and $\Rt_i(\Delta)\coloneqq \Rt_i(\SR)$ for the
support resonance varieties, respectively.

Using Proposition \ref{prop:aah-hochster},
a precise description of the varieties $\RR^i(\Delta)$ was given
in \cite[Theorem 3.8]{PS-adv09}, as follows.

\begin{proposition}
\label{prop:irred-comps-jump}
For each $i\ge 1$, the decomposition in irreducible components of the jump
resonance variety is given by
\begin{equation}
	\label{eq:res-delta}
	\RR^i(\Delta)= \bigcup_{\substack{
	          \textup{$\mathsf{V}' \subseteq \mathsf{V}$ maximal such that}\\[1pt]
		\exists \sigma\in \Delta_{\mathsf{V}\setminus \mathsf{V}'}, \
			\widetilde{H}_{i-1-\abs{\sigma}} (\lk_{\Delta_{\mathsf{V}'}}(\sigma);\k) \ne 0
			}}
	 \k^{\mathsf{V}'}.
\end{equation}
\end{proposition}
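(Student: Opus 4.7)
The plan is to derive the decomposition of $\RR^i(\Delta)$ as a combinatorial consequence of the Hochster-type formula in Proposition~\ref{prop:aah-hochster}, using that $\RR^i(\Delta)$ is a priori Zariski closed in $A^1 \cong \k^n$ via the Fitting-ideal description~\eqref{eq:res-fitt}.

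First I would invoke Proposition~\ref{prop:aah-hochster} to note that $\dim_{\k} H^i(\SR, \delta_a)$ depends on $a \in A^1$ only through $\mathsf{V}' \coloneqq \Supp(a)$. Thus $a \in \RR^i(\Delta)$ if and only if $\mathsf{V}'$ is \emph{resonant}, meaning there exists $\sigma \in \Delta_{\mathsf{V} \setminus \mathsf{V}'}$ with $\widetilde{H}_{i-1-|\sigma|}(\lk_{\Delta_{\mathsf{V}'}}(\sigma);\k) \neq 0$. This immediately gives the disjoint stratification
\begin{equation*}
\RR^i(\Delta) = \bigsqcup_{\mathsf{V}' \text{ resonant}} \{a \in A^1 : \Supp(a) = \mathsf{V}'\}.
\end{equation*}

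Next, since $\RR^i(\Delta)$ is Zariski closed and the closure of each torus stratum $\{a : \Supp(a) = \mathsf{V}'\}$ is the coordinate subspace $\k^{\mathsf{V}'}$, the stratification upgrades to
\begin{equation*}
\RR^i(\Delta) = \bigcup_{\mathsf{V}' \text{ resonant}} \k^{\mathsf{V}'}.
\end{equation*}
As a byproduct, resonance is automatically downward-closed under inclusion of $\mathsf{V}'$, since the closure of any single stratum already sweeps out strata indexed by all subsets. (This can also be checked combinatorially, but the closure argument makes it automatic.)

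Finally, since each $\k^{\mathsf{V}'}$ is irreducible and coordinate subspaces satisfy $\k^{\mathsf{V}_1} \subseteq \k^{\mathsf{V}_2}$ iff $\mathsf{V}_1 \subseteq \mathsf{V}_2$, the irreducible components of the displayed union are precisely those $\k^{\mathsf{V}'}$ with $\mathsf{V}'$ \emph{maximal} among resonant subsets of $\mathsf{V}$, which is the claim of~\eqref{eq:res-delta}. The only substantive input is Proposition~\ref{prop:aah-hochster}; the rest is bookkeeping of a coordinate arrangement, so there is no genuine obstacle once the Hochster-type formula is in hand.
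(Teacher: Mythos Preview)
Your argument is correct and matches the paper's approach: the paper does not give a detailed proof but simply attributes the result to \cite[Theorem 3.8]{PS-adv09}, noting that it is obtained ``using Proposition~\ref{prop:aah-hochster}.'' Your proposal carries out precisely this derivation---reducing membership in $\RR^i(\Delta)$ to a condition on $\Supp(a)$ via the Hochster-type formula, then using Zariski closedness of $\RR^i(\Delta)$ to pass from torus strata to coordinate subspaces and extract the maximal ones as irreducible components.
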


Here $\k^{\mathsf{V}'}$ denotes the coordinate subspace of $\k^{\mathsf{V}}=\k^n$ (where
$n=\abs{\mathsf{V}}$) spanned by the vectors $\{\mathbf{e}_i  \mid i\in \mathsf{V}'\}$.
On the other hand, for the support resonance defined in (\ref{eq:ria-spec}), the situation
is different in degrees $i>1$.

\begin{theorem}
\label{prop:irred-comps-support}
For each $i\ge 1$, the scheme structure on the support resonance locus $\Rt_i(\Delta)$ is reduced.
Moreover, the decomposition in irreducible components is given by
	\begin{equation}
		\label{eq:res-delta2}
		\Rt_i(\Delta)=\bigcup_{\substack{
			\textup{$\mathsf{V'}\subseteq \mathsf{V}$ maximal with}\\[1pt]
			\widetilde{H}^{i-1}(\Delta_{\mathsf{V'}};\k)\neq 0
		}}
		\k ^{\mathsf{V'}}.
	\end{equation}
\end{theorem}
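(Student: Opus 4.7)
The plan is to deduce Theorem \ref{prop:irred-comps-support} by combining three tools already established earlier in the paper: the square-free structure of Koszul modules of a simplicial complex (Proposition \ref{prop:koszul-sqf}), the general structure theorem for the support of an $\N^n$-graded square-free module (Proposition \ref{prop:sfq-support}), and the explicit identification of the multigraded pieces of $\Wb_i(\Delta)$ with reduced (co)homology of restricted subcomplexes (Theorem \ref{thm:KoszModules-KoszCohom}). No new external input is needed.

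First, I would invoke Proposition \ref{prop:koszul-sqf} to assert that $\Wb_i(\Delta)$ is a finitely generated, $\N^n$-graded square-free $S$-module. Applying Proposition \ref{prop:sfq-support} directly then yields two things at once: the annihilator $\Ann(\Wb_i(\Delta))$ is a square-free monomial (hence radical) ideal, so the scheme $\Rt_i(\Delta)=\Spec(S/\Ann(\Wb_i(\Delta)))$ is reduced, and the irreducible decomposition of its support is
\begin{equation*}
\Rt_i(\Delta) \;=\; \bigcup_{\substack{\mathbf{b}\text{ square-free}\\ \text{maximal with }[\Wb_i(\Delta)]_\mathbf{b}\,\neq\, 0}} \k^{\Supp(\mathbf{b})}.
\end{equation*}

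Next, I would translate the nonvanishing condition for $[\Wb_i(\Delta)]_\mathbf{b}$ into a topological one. For a square-free multi-index $\mathbf{b}$, Theorem \ref{thm:KoszModules-KoszCohom} provides a natural isomorphism $[\Wb_i(\Delta)]_\mathbf{b} \cong \widetilde{H}^{i-1}(\Delta_\mathbf{b};\k)^\vee$, so $[\Wb_i(\Delta)]_\mathbf{b}\neq 0$ if and only if $\widetilde{H}^{i-1}(\Delta_\mathbf{b};\k)\neq 0$. Since square-free multi-indices in $\N^n$ correspond bijectively to subsets $\mathsf{V}'\subseteq \mathsf{V}$ via $\mathbf{b}\mapsto \Supp(\mathbf{b})$, with $\Delta_\mathbf{b}=\Delta_{\mathsf{V}'}$, the maximality of $\mathbf{b}$ under componentwise order matches the maximality of $\mathsf{V}'$ under inclusion. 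Substituting into the decomposition above produces precisely \eqref{eq:res-delta2}.

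There is no real obstacle: all the substantive work has been carried out in Proposition \ref{prop:koszul-sqf} (verifying square-freeness by running the Koszul complex $\mathbf{K}^\Delta_\bullet$ through Corollary \ref{cor:Nn-sqf-homology}), in Theorem \ref{thm:annihilator} (showing the annihilator of a square-free module is radical), and in Theorem \ref{thm:KoszModules-KoszCohom} (the Hochster-type identification of multigraded pieces). The only mildly delicate point is the bookkeeping confirming that the two notions of maximality agree, which is immediate from the bijection between square-free multi-indices and subsets of $\mathsf{V}$. Thus the proof reduces to a short assembly of the three results above.
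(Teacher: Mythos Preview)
Your proposal is correct and follows essentially the same route as the paper's own proof: square-freeness of $\Wb_i(\Delta)$ via Proposition~\ref{prop:koszul-sqf}, reducedness and the component description via Theorem~\ref{thm:annihilator} and Proposition~\ref{prop:sfq-support}, and then the topological translation of the nonvanishing condition. The only cosmetic difference is that the paper cites Theorem~\ref{thm:hilb-wd} for the last step, whereas you invoke Theorem~\ref{thm:KoszModules-KoszCohom} directly; since the former is derived from the latter, your version is if anything slightly more streamlined.
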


\begin{proof}
The first claim follows from Proposition \ref{prop:koszul-sqf} and
Theorem \ref{thm:annihilator}. The precise structure of the decomposition in
irreducible components is governed by the multi-graded structure detailed in
Theorem \ref{thm:hilb-wd} and Proposition \ref{prop:sfq-support}.
Observe that (\ref{eq:res-delta2}) corresponds to the primary decomposition
of the ideal $\Ann(\Wb_i(\Delta))$.
\end{proof}

Notice the difference at the set level between (\refeq{eq:res-delta}) and (\refeq{eq:res-delta2});
in particular, observe that the support resonance loci are easier to describe. Furthermore, whereas
Theorem \ref{prop:irred-comps-support} guarantees that the support resonance schemes
$\Rt_i(\Delta)$ are always reduced,
the corresponding jump resonance loci $\RR^i(\Delta)$ are not necessarily reduced
(with the Fitting scheme structure), even in weight one, as the following example illustrates.

\begin{example}
\label{ex:path4}
Let $\Gamma$ be a path on $4$ vertices. Then
$\Fitt_0(W_1(\Gamma))=(x_2)\cap (x_3) \cap (x_1,x_2^2,x_3^2,x_4)$ is
not reduced, although $\Ann(W_1(\Gamma))=(x_2)\cap (x_3)$
is reduced. Therefore, the Fitting scheme structure on $\RR^1(\Gamma)$
has an embedded component at $0$.
\end{example}

A simplicial complex $\Delta$ of dimension $d$ is said to be a
{\em Cohen--Macaulay complex}\/  over $\k$ if
$\widetilde{H}^{\bullet}(\lk(\sigma);\k)$ is concentrated in degree
$d -\abs{\sigma}$, for all $\sigma\in \Delta$. As shown in \cite{DSY},
the jump resonance varieties of such a simplicial complex {\em propagate};
that is,
\begin{equation}
	\label{eq:propagate}
	\RR^1(\Delta) \subseteq \RR^2(\Delta)  \subseteq \cdots  \subseteq
	\RR^{d+1}(\Delta).
\end{equation}
For arbitrary simplicial complexes, though, the resonance varieties
do not always  propagate. This phenomenon, first identified in \cite{PS-adv09},
happens even for graphs.

\begin{example}[\cite{PS-adv09}]
	\label{ex:not-prop}
	Let $\Delta$ be the disjoint union of two edges. Then $\RR^1(\Delta)=\k^4$,
	whereas $\RR^2(\Delta)=\k^2 \cup \k^2$, the union of two transversal
	coordinate planes. Thus, $\RR^1(\Delta) \not\subseteq \RR^2(\Delta)$.
\end{example}

When $\Delta$ is Cohen--Macaulay,
propagation and formula \eqref{eq:union-res} give $\RR^i(\Delta)=\bigcup_{j\le i}
\Rt_j(\Delta)$. But it is not known whether the support resonance varieties  $\Rt_i(\Delta)$ propagate
when $\Delta$ is Cohen--Macaulay, or, equivalently, whether $\RR^i(\Delta)=\Rt_i(\Delta)$
in this case. In general, though, we can use the previous example to settle the
latter question in the negative.

\begin{example}
\label{ex:compare-res}
Let $\Delta$ be the disjoint union of two edges. Then $\Rt_1(\Delta)=\RR^1(\Delta)=\k^4$
but  $\Rt_2(\Delta)=\emptyset$ whereas, as we saw before, $\RR^2(\Delta)=\k^2 \cup \k^2$.
Thus, $\Rt_2(\Delta)\ne \RR^2(\Delta)$.
\end{example}

\section{Regularity and projective dimension for Koszul modules of simplicial complexes}
\label{section:CMreg}

\subsection{General bounds}
\label{subsec:reg}
We start this section with an upper bound on the Castelnuovo--Mumford
regularity and projective dimension of the Koszul modules.

\begin{proposition}
	\label{prop:reg-pdim}
	For every simplicial complex $\Delta$ on $n$ vertices
	and every $i>0$, the Koszul module $\Wb_i(\Delta)$
	has regularity at most $n$ and projective dimension
	at most $n-i-1$.
\end{proposition}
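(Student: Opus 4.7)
The plan is to exploit the square-free structure of $W_i(\Delta)$ (Proposition \ref{prop:koszul-sqf}) together with the identification of its square-free multigraded components with reduced simplicial homology (Theorem \ref{thm:KoszModules-KoszCohom}), and then read off both bounds from the minimal multigraded free resolution.

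First I would invoke Corollary \ref{cor:sqf-res}: the minimal free $\N^n$-graded resolution $\cdots \to F_1 \to F_0 \to W_i(\Delta) \to 0$ consists of square-free free $S$-modules. In particular, each $F_l = \bigoplus_{\mathbf{b}} S(-\mathbf{b})^{\beta_{l,\mathbf{b}}}$ with $\mathbf{b}$ ranging over square-free multidegrees, which forces $|\mathbf{b}| \leq n$ for every $(l,\mathbf{b})$ with $\beta_{l,\mathbf{b}}(W_i(\Delta)) \neq 0$. The regularity bound is then immediate:
\[
\reg(W_i(\Delta)) = \max\{\,|\mathbf{b}| - l : \beta_{l,\mathbf{b}}(W_i(\Delta))\neq 0\,\} \leq n - 0 = n.
\]

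For the projective dimension bound, the key input is a vanishing statement: I would show that $[W_i(\Delta)]_{\mathbf{b}} = 0$ for every square-free multidegree $\mathbf{b}$ with $|\mathbf{b}| \leq i$. Via the isomorphism $[W_i(\Delta)]_{\mathbf{b}} \cong \widetilde H_{i-1}(\Delta_{\mathbf{b}};\k)$ from Theorem \ref{thm:KoszModules-KoszCohom}, this reduces to a short combinatorial observation: if $\Supp(\mathbf{b})$ has cardinality $|\mathbf{b}|\leq i$, then either $\Delta_{\mathbf{b}}$ has dimension strictly less than $i-1$ (so the chain group $C_{i-1}$ vanishes) or it is the full $(i-1)$-simplex on $\Supp(\mathbf{b})$ (hence contractible). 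In either case $\widetilde H_{i-1}(\Delta_{\mathbf{b}};\k)=0$.

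Combined with the square-freeness of the resolution, this vanishing forces every minimal generator of $W_i(\Delta)$ to lie in a square-free multidegree with $|\mathbf{b}| \geq i+1$. By minimality of the resolution (differentials have entries in the maximal ideal), the minimal generators of $F_l$ sit in total degree at least $(i+1)+l$. Pairing this lower bound against the square-freeness bound $|\mathbf{b}|\leq n$ gives $(i+1)+l \leq n$, i.e., $l \leq n-i-1$, so $\pdim W_i(\Delta) \leq n-i-1$. The only non-formal point in the whole argument is the combinatorial vanishing of $\widetilde H_{i-1}(\Delta_{\mathbf{b}};\k)$ for $|\mathbf{b}|\leq i$, and this is genuinely elementary, so I do not foresee a real obstacle.
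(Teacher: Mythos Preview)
Your argument is correct and follows the same skeleton as the paper's: use the square-freeness of the minimal free resolution (Corollary \ref{cor:sqf-res}) to cap all generator multidegrees at total degree $n$, then combine an upper bound $n$ with a lower bound $i+1$ on the total degree of minimal generators, letting minimality push the degrees up by one at each step to get $\pdim \le n-i-1$.

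The one place you diverge is in establishing the lower bound. You route through Theorem \ref{thm:KoszModules-KoszCohom} and a combinatorial vanishing of $\widetilde H_{i-1}(\Delta_{\mathbf b};\k)$ for $|\mathbf b|\le i$; the paper instead observes directly that $W_i(\Delta)$ is a subquotient of the module $Z_i$ of Koszul $i$-cycles, which is generated in degree $i+1$, so $W_i(\Delta)$ lives entirely in degrees $\ge i+1$. The paper's route is lighter (it avoids invoking Theorem \ref{thm:KoszModules-KoszCohom}, whose proof as written only explicitly treats the range $|\mathbf b|\ge i+1$), while yours has the virtue of tying the vanishing back to the homological description that drives the rest of the paper. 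Either way the substance is the same.
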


\begin{proof}
By definition, the Koszul module $\Wb_i(\Delta)$ is a sub-quotient of
the module $Z_i \subseteq \bwedge^i V \otimes_{\k} S$
of $i$-th cycles in the Koszul complex of $x_1, \ldots, x_n$.
Since $Z_i$ is generated in degree $i+1$, it follows that
the  degree of any of the generators  of $\Wb_i(\Delta)$ is at least $i+1$.
Let $\mathbf{F}_\bullet$ denote the minimal free resolution of  $\Wb_i(\Delta)$.
By Proposition \ref{prop:koszul-sqf} and Corollary \ref{cor:sqf-res},
$\mathbf{F}_\bullet$ is a complex of $\N^n$-graded $S$-modules
generated in square-free multidegrees, hence,
the total degree of the  generators of each $\mathbf{F}_h$ is at most $n$.
The statement on the regularity follows immediately.
Since the least degree of the generators of $\mathbf{F}_{h+1}$ is strictly
larger than the least degree of the generators of $\mathbf{F}_{h}$,
it follows that $\mathbf{F}_{h} = 0 $ for $h > n-i-1$.
\end{proof}

\subsection{Regularity of Koszul modules for simplicial complexes of special type}
\label{sect:fixed-degree}

We fix integers $n\ge 4$ and $1\le d\le n-3$ and
assume $\Delta$ is a simplicial complex of dimension $d$ on $n$ vertices
whose $(d-1)$-skeleton coincides with that of the full simplex, that is,
\begin{equation}
\label{eq:delta-skeleton}
\Delta^{(d-1)}=\big(2^{[n]}\big)^{(d-1)}.
\end{equation}
For instance, if $d=1$, then $\Delta$ is simply a (simplicial) graph on
$n$ vertices. If $d=2$, then $\Delta$ is obtained from the complete
graph on $n$ vertices by filling in some triangles. For this type of
simplicial complexes that generalize graphs, the nature of the Koszul modules
can be made more precise, as follows.

\begin{proposition}
\label{prop:koszul-d-complex}
For a simplicial complex $\Delta$ as above, the following hold.
 \begin{enumerate}[itemsep = 2.5pt, topsep=-0.5pt]
\item \label{pr1} $\Wb_i(\Delta)=0$ for $i\notin \{d,d+1\}$.
\item \label{pr2} $\Wb_d(\Delta)=\coker\big(\partial ^E_{d+2}+j_{d+1}\otimes_{\k} S\big)$.
\item \label{pr3} $\Wb_{d+1}(\Delta)=\ker\big(\partial ^A_{d+1}\big)$, and hence it is
either zero or torsion-free.
 \end{enumerate}	
\end{proposition}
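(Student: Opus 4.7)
The plan is to analyze the chain complex $(A_\bullet \otimes_\k S, \partial)$ of $A = \SR$ in three ranges ($i \leq d-1$, $i \in \{d, d+1\}$, $i \geq d+2$), exploiting two features of $\Delta$. First, since $\dim \Delta = d$, no face has dimension $\geq d+1$, hence $A^k = 0$ and dually $A_k = 0$ for all $k \geq d+2$. Second, because the $(d-1)$-skeleton of $\Delta$ coincides with that of the full simplex, every subset of $[n]$ of cardinality at most $d$ is a face, so the minimal non-faces have cardinality $\geq d+1$; in particular $J_{\Delta,k} = 0$ for $k \leq d$, which gives $A^k = E^k$ and $A_k = E_k = \bwedge^k V$ for all $k \leq d$.

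For part (\ref{pr1}), the range $i \geq d+2$ is immediate: both $A_{i+1}$ and $A_i$ vanish, so $\Wb_i(\Delta) = 0$. For $1 \leq i \leq d-1$, the three consecutive terms $A_{i+1} \otimes_\k S \to A_i \otimes_\k S \to A_{i-1} \otimes_\k S$ coincide with the corresponding segment of the Koszul complex $(\bwedge^\bullet V \otimes_\k S, \partial^E)$, which is exact at position $i$ for $i \geq 1$; hence $\Wb_i(\Delta) = 0$.

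For part (\ref{pr2}), I would observe that the identifications $A_d = E_d$ and $A_{d-1} = E_{d-1}$, combined with the embedding $j_{d+1} \colon A_{d+1} \hookrightarrow E_{d+1}$, supply precisely the data required to rerun the Snake Lemma argument from the proof of Proposition~\ref{prop:Koszul-for-fixed-degree}(\ref{wb2}) on the diagram~\eqref{eq:snake-1}, with $K$ replaced by $A_{d+1}$. That argument does not actually invoke the hypothesis that $J$ is generated purely in degree $d+1$; it uses only $J_k = 0$ for $k \leq d$, in order to compute $\ker(\partial^A_d) = \ker(\partial^E_d) = \im(\partial^E_{d+1})$ via Koszul exactness. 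This yields the stated presentation $\Wb_d(\Delta) = \coker\!\bigl(\partial^E_{d+2} + j_{d+1} \otimes_\k \id_S\bigr)$.

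For part (\ref{pr3}), since $A_{d+2} = 0$, the incoming differential $\partial^A_{d+2}$ is zero, so $\Wb_{d+1}(\Delta) = \ker(\partial^A_{d+1})$ is a submodule of the free $S$-module $A_{d+1} \otimes_\k S$; any submodule of a free module is torsion-free, so $\Wb_{d+1}(\Delta)$ is either zero or torsion-free. No real obstacle is anticipated; the principal point to verify carefully is that, in part (\ref{pr2}), the Snake Lemma diagram~\eqref{eq:snake-1} and the resulting identification of $\Wb_d$ depend only on the low-degree equalities $A_k = E_k$ for $k \leq d$, and not on the stronger single-degree-generation hypothesis invoked for Proposition~\ref{prop:Koszul-for-fixed-degree}.
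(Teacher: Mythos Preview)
Your proof is correct and follows essentially the same route as the paper's: invoke Proposition~\ref{prop:Koszul-for-fixed-degree} for the low range of part~(\ref{pr1}) and for part~(\ref{pr2}), and use $A_{d+2}=0$ for the high range and for part~(\ref{pr3}). Your observation that the Snake Lemma argument behind Proposition~\ref{prop:Koszul-for-fixed-degree}(\ref{wb2}) only requires $J_k=0$ for $k\le d$, rather than $J$ being generated purely in degree $d+1$, is a valid refinement; the paper simply asserts that the hypothesis of Section~\ref{subsec:fixed-deg} is met without addressing this point.
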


\begin{proof}
Recall that $\Wb_i(\Delta)=H_i (A_{\bullet} \otimes_{\k} S,\partial)$, where 
$A=\SR$. By condition \eqref{eq:delta-skeleton}, the graded algebra $A=E/J_{\Delta}$ 
satisfies the hypothesis from the beginning of Section \ref{subsec:fixed-deg}. Hence, 
Proposition \ref{prop:Koszul-for-fixed-degree} applies, showing that 
$\Wb_i(\Delta)=0$ for $i < d$. Moreover, since $\dim(\Delta)=d$, 
we have that $A_i=0$ for $i> d+1$, and so $\Wb_i(\Delta)$ also 
vanishes in that range, thereby proving Part \eqref{pr1}. 

Part \eqref{pr2} follows at once from part \eqref{wb2} of 
Proposition \ref{prop:Koszul-for-fixed-degree}, 
 while part \eqref{pr3} follows from the fact that $A_{d+2}=0$.
\end{proof}

Using the explicit presentation of $\Wb_d(\Delta)$ from part \eqref{pr2}, we can 
improve the bound on regularity from Proposition \ref{prop:reg-pdim}.

\begin{proposition}
\label{prop:reg-graph}
With notation as above, we have $\reg \Wb_d(\Delta) \leq n-2$.
\end{proposition}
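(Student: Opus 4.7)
The plan is to control the minimal free resolution of $\Wb_d(\Delta)$ by combining the explicit presentation
\[
\bwedge^{d+2}V\otimes_{\k} S \xrightarrow{\phi} L\otimes_{\k} S \to \Wb_d(\Delta) \to 0
\]
from Proposition \ref{prop:koszul-d-complex}\eqref{pr2} and Remark \ref{rem:snake} (with $L=\bwedge^{d+1}V/K$ and $\phi$ the Koszul differential $\partial_{d+2}$ composed with the projection $\bwedge^{d+1}V\twoheadrightarrow L$) with the square-free structure from Proposition \ref{prop:koszul-sqf} and Corollary \ref{cor:sqf-res}. The crucial structural observation is that $\phi$ is degree-preserving, so $\im(\phi)$ is generated in degree $d+2$ and has no element of smaller degree.

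First I would pin down $\Tor^S_0$ and $\Tor^S_1$ of $\Wb_d(\Delta)$. For $\Tor^S_0$: since $A_d=\bwedge^dV$ and the Koszul differential $\partial_d$ is injective on $\bwedge^dV$ in its minimum degree, $[\Wb_d(\Delta)]_j=0$ for $j<d+1$; combined with $[\im(\phi)]_{d+1}=0$, this forces $L\to[\Wb_d(\Delta)/\mathfrak{m}\Wb_d(\Delta)]_{d+1}$ to be an isomorphism, so $L\otimes_{\k} S\twoheadrightarrow\Wb_d(\Delta)$ is already the minimal $F_0$ and $\Tor^S_0(\Wb_d(\Delta),\k)$ is concentrated in degree $d+1$. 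For $\Tor^S_1$: applying the long exact sequence in $\Tor^S$ to the short exact sequence $0\to\im(\phi)\to L\otimes_{\k} S\to\Wb_d(\Delta)\to 0$ and using the freeness of $L\otimes_{\k} S$ yields
\[
\Tor^S_1(\Wb_d(\Delta),\k) \cong \im(\phi)/\mathfrak{m}\cdot\im(\phi),
\]
which is concentrated in degree $d+2$ because $\im(\phi)$ is generated in degree $d+2$ and vanishes in smaller degrees.

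Finally, I combine the above with the square-free structure, which guarantees that every non-vanishing Betti number $\beta_{h,j}(\Wb_d(\Delta))$ satisfies $j\leq n$. Using the hypothesis $d\leq n-3$: for $h=0$ one has $j=d+1$ and $j-h=d+1\leq n-2$; for $h=1$ one has $j=d+2$ and $j-h=d+1\leq n-2$; for $h\geq 2$ the bound $j\leq n$ gives $j-h\leq n-2$. Taking the maximum yields $\reg\Wb_d(\Delta)\leq n-2$. The main subtlety lies in the $\Tor^S_1$ step: one cannot just invoke square-freeness (which alone gives only $j\leq n$, hence $j-h\leq n-1$), but must exploit the degree-preserving nature of $\phi$ to sharpen the bound to $j=d+2$.
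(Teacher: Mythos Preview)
Your argument is correct. Both your proof and the paper's rely on the square-free structure (Proposition~\ref{prop:koszul-sqf} and Corollary~\ref{cor:sqf-res}) to bound the degrees appearing in the minimal free resolution by $n$, but you handle the low homological degrees differently. The paper uses the short exact sequence $0\to D\to Z_d\to \Wb_d(\Delta)\to 0$, where $Z_d$ is the module of Koszul $d$-cycles (which has a linear resolution, so $\reg Z_d=d+1$) and $D=\partial_{d+1}(K\otimes_{\k}S)$; it then bounds $\reg D\le n-1$ via square-freeness and applies the standard inequality $\reg \Wb_d(\Delta)\le\max(\reg Z_d,\reg D-1)$. You instead work with the alternative presentation from Remark~\ref{rem:snake} and compute $\Tor^S_0$ and $\Tor^S_1$ of $\Wb_d(\Delta)$ directly, showing they sit in degrees $d+1$ and $d+2$ respectively, and only invoke the square-free bound $j\le n$ for $h\ge 2$. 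Your route is a bit more hands-on and makes explicit that $F_0=L\otimes_{\k}S$ is already the minimal cover and that the first syzygies are linear; the paper's route is slicker in that it packages the low-degree information into the single fact $\reg Z_d=d+1$ and avoids a case analysis on $h$. Either way the key insight is the same: square-freeness alone gives only $\reg\le n-1$, and one extra linear step (your $\Tor^S_1$ computation, or the paper's $\reg D-1$) is needed to reach $n-2$.
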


\begin{proof}
We have a presentation
\begin{equation}
\label{eq:pres}
\begin{tikzcd}[column sep=16pt]
0 \ar[r]& D  \ar[r]& Z_d  \ar[r]& \Wb_d(\Delta)  \ar[r]& 0 ,
\end{tikzcd}
\end{equation}
where $Z_d\subseteq \bwedge^dV \otimes_{\k} S $ is the module of
Koszul $d$-cycles, and $D$ is the image of $K \otimes_{\k} S$ under
the Koszul differential. Both $Z_d$ and $D$ are generated in degree $d+1$.
The module $Z_d$ has a linear free resolution, consisting
of the truncated Koszul complex, so $\reg Z_d =d+1$.
Since the module $D$ is square-free, its syzygy modules
are also square-free, and hence, they are generated in degrees at most $n$.
This implies that $\reg D \leq n-1$, since $D$ does not have generators
of degree $n$. Applying the long exact sequence of $\Tor (-,\k)$ to
\eqref{eq:pres}, we obtain
\begin{equation}
\label{eq:reg-wb}
\reg \Wb_d(\Delta) \leq \max(\reg Z_d, \reg D-1)= \max(d+1,n-2) = n-2.
\end{equation}
and this completes the proof.
\end{proof}

If $d\le 1$, that is, if $\Delta=\Gamma$ is a graph on $n$ vertices, taking into
account the degree shift, we obtain the bound
\begin{equation}
\label{eq:reg-graph}
\reg W_1(\Gamma) \le n-4.
\end{equation}

\begin{example}
\label{ex:n-cycle}
If $\Gamma=\mathcal{C}_n$ is the cycle on $n\ge 4$ vertices,
then the regularity of $W_1(\Gamma)$ attains the above bound:
\[
\reg W_1(\Gamma)=n-4
\quad \text{and} \quad \pdim W_1(\Gamma)= n-2.
\]
This follows from \eqref{eq:pres}, since in this case the module $D$
has only one syzygy, of degree $n$.
\end{example}

\begin{remark}
\label{rem:koszul-wd-delta}
For the Koszul module $\Wb_d(\Delta)$, the simplified presentation
\eqref{eqn:simple-presentation} has the following nice interpretation.
Let $\widetilde{\Delta}$ be the simplicial complex which is maximal 
(with respect to inclusion) among all simplicial complexes that share 
the same $d$-skeleton with $\Delta$; for instance, if $d=1$, then 
$\widetilde{\Delta}$ is the flag complex of the graph $\Gamma=\Delta$. 
Denote by $V_i$ the $\k$-vector space with basis the set of $i$-dimensional 
missing faces of $\widetilde{\Delta}$. We then have an exact sequence,
\begin{equation}
\label{eq:spn-f}
\begin{tikzcd}[column sep=18pt]
V_{d+2}\otimes_{\k} S\ar[r]& V_{d+1}\otimes_{\k} S\ar[r]&
\Wb_d(\Delta)\ar[r]& 0.
\end{tikzcd}
\end{equation}
\end{remark}

Using Proposition \ref{prop:koszul-d-complex}, together with formula \eqref{eq:res-delta}, we
obtain the following immediate corollary.

\begin{corollary}
\label{prop:res-d-complex}
With $\Delta$ as above, we have:
	\begin{enumerate}[itemsep = 2.5pt,topsep=0pt]
		\item $\Rt_i(\Delta)=\RR^i(\Delta)$ for all $i\ne d+1$.
		\item $\Rt_{d+1}(\Delta)$ is equal to either $\emptyset$ or $\k^n$.
		\item $\RR^d(\Delta)= \bigcup_{\substack{\mathsf{V'}\subseteq \mathsf{V}\,
		        \mathrm{maximal}\\[0.5pt]
			{\widetilde{H}_{d-1}(\Delta_{\mathsf{V'}};\k)\ne 0}}} \k^\mathsf{V'}$.
	\end{enumerate}
\end{corollary}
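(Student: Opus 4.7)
The corollary hinges on Proposition \ref{prop:koszul-d-complex}, which confines the non-zero Koszul modules $\Wb_i(\Delta)$ to indices $i\in\{d,d+1\}$ and describes each one explicitly, together with the structural results of Section \ref{sec:simp-comp}.

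For part (2), Proposition \ref{prop:koszul-d-complex}(3) realizes $\Wb_{d+1}(\Delta)$ as the kernel of a map of free $S$-modules, so it embeds into the free module $A_{d+1}\otimes_\k S$. As a submodule of a free module over the polynomial domain $S$, it is torsion-free. If it vanishes, then $\Rt_{d+1}(\Delta)=\emptyset$; otherwise the annihilator is zero and $\Rt_{d+1}(\Delta)=\Spec(S)=\k^n$, giving the dichotomy.

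Part (1) splits by range. For $i\notin\{d,d+1\}$ Proposition \ref{prop:koszul-d-complex}(1) gives $\Wb_i(\Delta)=0$, so $\Rt_i(\Delta)$ is trivial. On the jump side, the skeletal hypothesis $\Delta^{(d-1)}=(2^{[n]})^{(d-1)}$ forces $A^j=\bwedge^j V^\vee$ for all $j\le d$; hence for $1\le i\le d-1$ the cochain complex $(A^\bullet,\delta_a)$ at positions $i-1,i,i+1$ is the Koszul complex, which is exact at each $a\ne 0$, and for $i\ge d+2$ one has $A^i=0$. In both sub-ranges $\RR^i(\Delta)$ is correspondingly trivial. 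For the remaining index $i=d$, the set-theoretic identity $\bigcup_{j\le d}\Rt_j(\Delta)=\bigcup_{j\le d}\RR^j(\Delta)$ from \eqref{eq:union-res}, combined with the triviality of all lower-index contributions, collapses to $\Rt_d(\Delta)=\RR^d(\Delta)$.

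Part (3) then follows by applying Theorem \ref{prop:irred-comps-support} at $i=d$ to obtain the decomposition of $\Rt_d(\Delta)$ and substituting through the equality $\Rt_d(\Delta)=\RR^d(\Delta)$ just established. The main obstacle is the step $i=d$ of part (1): formula \eqref{eq:union-res} was quoted under a non-vanishing hypothesis on $\Wb_1,\ldots,\Wb_i$ that fails here when $d\ge 2$, and in the range $1\le i\le d-1$ one also needs to reconcile the scheme/set conventions at the origin (where $\Rt_i(\Delta)$ and $\RR^i(\Delta)$ can differ by an embedded component). If a sufficiently general version of \eqref{eq:union-res} is not available, the safer fallback is to read off $\RR^d(\Delta)$ directly from Proposition \ref{prop:irred-comps-jump} and show, using the skeletal hypothesis, that any stratum produced by a non-empty $\sigma$ in the Hochster-type sum is absorbed by a larger maximal $\mathsf{V}'$ satisfying the $\sigma=\emptyset$ condition, which precisely matches the list given by Theorem \ref{prop:irred-comps-support}.
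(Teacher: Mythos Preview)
Your proposal is essentially correct and aligns with the paper's approach, which consists of a single sentence citing Proposition~\ref{prop:koszul-d-complex} together with formula~\eqref{eq:res-delta}. Note that the paper does \emph{not} invoke \eqref{eq:union-res}; your ``fallback'' route---comparing the explicit description of $\RR^d(\Delta)$ from Proposition~\ref{prop:irred-comps-jump} with that of $\Rt_d(\Delta)$ from Theorem~\ref{prop:irred-comps-support} and checking that the skeletal hypothesis collapses the Hochster sum to the $\sigma=\emptyset$ terms---is in fact the paper's intended argument for the case $i=d$. You were right to flag the non-vanishing hypothesis on \eqref{eq:union-res}, and your observation about the origin in the range $1\le i\le d-1$ (where $\Rt_i(\Delta)=\emptyset$ but $0\in\RR^i(\Delta)$ since $A^i\ne 0$) is a genuine discrepancy that the paper glosses over; the equality in part~(1) should be read away from~$0$, consistent with the convention noted in the introduction.
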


\begin{example}
\label{ex:tetra}
Let $\Delta$ be the boundary of the tetrahedron, with the face $\sigma=\{1,2,3\}$
missing. Then $\Delta^{(1)}=\big(2^{[4]}\big)^{(1)}$, and so $\Delta$
is a simplicial complex covered by the above corollary, with $d=2$.
In this case, we have that $\Rt_d(\Delta)=\{x_4=0\}$, since $H_1(\Delta_{\sigma};\k)=\k$,
and  $\RR^d(\Delta)=\{x_4=0\}$, since
$\widetilde{H}_{2-1-1}(\lk_{\Delta_{\sigma}}(\{4\});\k)=\widetilde{H}_{0}(\emptyset;\k)=\k$.
\end{example}

As already mentioned before, the loci $\Rt_{d+1}(\Delta)$ and $\RR^{d+1}(\Delta)$
can be different, in general. For example, if we take the graph $\Gamma$ on four
vertices with edges $(1,2)$ and $(3,4)$ as in Example \ref{ex:not-prop}, then
$\Rt_2(\Gamma)=\emptyset$ while $\RR^2(\Gamma)=V(x_1,x_2)\cup V(x_3,x_4)$.

\newcommand{\arxiv}[1]
{\texttt{\href{http://arxiv.org/abs/#1}{arXiv:#1}}}
\newcommand{\arx}[1]
{\texttt{\href{http://arxiv.org/abs/#1}{arxiv:}}
\texttt{\href{http://arxiv.org/abs/#1}{#1}}}
\newcommand{\arxx}[2]
{\texttt{\href{https://arxiv.org/abs/#1.#2}{arxiv:#1.}}
\texttt{\href{https://arxiv.org/abs/#1.#2}{#2}}}
\newcommand{\doi}[1]
{\texttt{\href{http://dx.doi.org/#1}{doi:#1}}}
\renewcommand{\MR}[1]
{\href{http://www.ams.org/mathscinet-getitem?mr=#1}{MR#1}}

\bibliographystyle{amsplain}

\begin{thebibliography}{10}

\bibitem{AFPRW2}
Marian Aprodu, Gavril Farkas, \c{S}tefan Papadima, Claudiu Raicu,
and Jerzy Weyman,
\href{https://doi.org/10.1007/s00222-019-00894-1}%
{\em Koszul modules and Green's Conjecture},
Invent. Math. \textbf{218} (2019), no.~3, 657--720.
\MR{4022070}

\bibitem{AFPRW}
Marian Aprodu, Gavril Farkas, \c Stefan Papadima, Claudiu Raicu, and Jerzy Weyman,
\href{https://doi.org/10.1215/00127094-2022-0010}%
{\em Topological invariants of groups and {K}oszul modules}, Duke Math. J.
\textbf{171} (2022), no.~19, 2013--2046.
\MR{4484204}

\bibitem{AFRS}
Marian Aprodu, Gavril Farkas, Claudiu Raicu, and Alexander~I. Suciu,
{\em Reduced resonance schemes and Chen ranks}, preprint
\arxiv{2303.07855}.

\bibitem{AAH} Annetta Aramova,  Luchezar Avramov, and J\"{u}rgen Herzog,
\href{https://doi.org/10.1090/S0002-9947-99-02298-9}%
{\em Resolutions of monomial ideals and cohomology
over exterior algebras}, Trans. Amer. Math. Soc. \textbf{352}
(2000), no.~2, 579--594.
\MR{1603874}

\bibitem{Brown-Erman} Michael K. Brown and Daniel Erman,
\href{https://doi.org/10.1007/s00208-024-02803-1}%
{\em Linear strands of multigraded free resolutions},
Math. Ann. (2024). 

\bibitem{DSY} Graham Denham, Alexander~I. Suciu, and Sergey Yuzvinsky,
\href{http://dx.doi.org/10.1007/s00029-017-0343-5}%
{\em Abelian duality and propagation of resonance}
Selecta Math. \textbf{23} (2017), no.~4, 2331--2367.
\MR{3703455}

\bibitem{Ei-syz} David Eisenbud,
\href{https://dx.doi.org/10.1007/b137572}%
{\em The geometry of syzygies},
a second course in commutative algebra and algebraic geometry,
Grad. Texts in Math., vol.~229, Springer-Verlag, New York, 2005.
\MR{2103875}

\bibitem{HerzogHibi}
J\"urgen Herzog and Takayuki Hibi,
\href{https://doi.org/10.1007/978-0-85729-106-6_1}%
{\em Monomial ideals}, Graduate Texts in Mathematics, vol. 260,
Springer, London, 2011.

\bibitem{PS-mathann} \c{S}tefan Papadima and Alexander~I. Suciu,
\href{http://doi.org/10.1007/s00208-005-0704-9}%
{\em Algebraic invariants for right-angled {A}rtin groups},
Math. Annalen, \textbf{334} (2006), no.~3, 533--555.
\MR{2207874}

\bibitem{PS-adv09} \c{S}tefan Papadima and Alexander~I. Suciu,
\href{https://dx.doi.org/10.1016/j.aim.2008.09.008}%
{\em Toric complexes and Artin kernels},
Adv. Math. \textbf{220} (2009), no.~2, 441--477.
\MR{2466422}

\bibitem{PS-mrl} \c{S}tefan Papadima and Alexander~I. Suciu,
\href{http://dx.doi.org/10.4310/MRL.2014.v21.n4.a13}%
{\em Jump loci in the equivariant spectral sequence},
Math. Res. Lett. \textbf{21} (2014), no.~4, 863--883.
\MR{3275650}

\bibitem{PS-crelle} \c{S}tefan Papadima and Alexander~I. Suciu,
\href{https://dx.doi.org/10.1515/crelle-2013-0073}%
{\emph{Vanishing resonance and representations of {L}ie algebras}},
J. Reine Angew. Math. \textbf{706} (2015), 83--101.
\MR{3393364}

\bibitem{Su-indam} Alexander~I. Suciu,
\href{http://dx.doi.org/10.1007/978-3-319-31580-5_1}%
{\em Around the tangent cone theorem}, in:
{\em Configuration Spaces: Geometry, Topology and Representation Theory},
1--39, Springer INdAM series, vol.~14, Springer, Cham, 2016.
\MR{3615726}

\bibitem{Ya00} Kohji Yanagawa,
\href{https://doi.org/10.1006/jabr.1999.8130}%
{\em Alexander duality for Stanley--Reisner rings and squarefree
$\mathbf{N}^n$-graded modules}, J. Algebra \textbf{225} (2000),
no.~2, 630--645.
\MR{1741555}

\end{thebibliography}

\end{document}